\newtheorem{theorem}{Theorem}
\newtheorem{corollary}[theorem]{Corollary}
\newtheorem{example}[theorem]{Example}
\newtheorem{examples}[theorem]{Examples}
\newtheorem{lemma}[theorem]{Lemma}
\newtheorem{proposition}[theorem]{Proposition}
\newtheorem{remark}[theorem]{Remark}
\newenvironment{proof}[1][Proof]{\noindent\textbf{#1.} }{\ \rule{0.5em}{0.5em}}
\begin{document}

\title{On the Distribution of Explosion Time of Stochastic Differential
Equations}
\author{Jorge A. Le\'{o}n, Liliana Peralta Hern\'{a}ndez \\
Departamento de Control Autom\'{a}tico\\
Cinvestav-IPN \\
Apartado postal 14-740\\
07000 M\'exico D.F., Mexico\\
and \and Jos\'{e} Villa-Morales \\
Departamento de Matem\'{a}ticas y F\'{\i}sica\\
Universidad Aut\'{o}noma de Aguascalientes\\
Avenida Universidad 940, Ciudad Universitaria \\
20131 Aguascalientes, \ Ags., Mexico}
\date{}
\maketitle

\begin{abstract}
In this paper we use the It\^o's formula and comparison theorems to study
the blow-up in finite time of stochastic differential equations driven by a
Brownian motion. In particular, we obtain an extension of Osgood criterion,
which can be applied to some nonautonomous stochastic differential equations
with additive Wiener integral noise. In most cases we are able to provide
with a method to figure out the distribution of the explosion time of the
involved equation.
\end{abstract}


\noindent \textit{Keywords: Iterated logarithm theorem for martingales,
It\^o's formula, comparison theorems for integral and stochastic
differential equations, Osgood criterion, partial differential equations of
second order, time of explosion.}

\noindent\textit{AMS MSC 2010:} Primary 45R05, 60H10; Secondary 49K20.

\section{Introduction}

Consider the stochastic differential equation 
\begin{eqnarray}
dX_{t} &=&b(X_{t})dt+\sigma (X_{t})dW_{t},\quad t>0,  \label{eq:facil} \\
X_{0} &=&x_{0}.  \notag
\end{eqnarray}%
Here $b,\sigma :{\mathbb{R}}\rightarrow {\mathbb{R}}$ are two locally
Lipschitz functions, $x_{0}\in {\mathbb{R}}$ and $\{W_{t}:t\geq 0\}$ is a
Brownian motion defined on a complete probability space $(\Omega ,\mathcal{F}%
,P)$.

It is well-known that the solution $X$ of equation (\ref{eq:facil}) may
explode in finite time. That is, $|X_{t}|$ goes to infinite as $t$
approaches to a stopping time that could be finite with positive
probability, which is called the explosion time of equation (\ref{eq:facil})
(see McKean \cite{MK}). The Feller test is an important tool of the
stochastic calculus to know if there is blow-up in finite time for (\ref%
{eq:facil}) (see, for example, Karatzas and Shreve \cite{KaS}). The reader
can consult de Pablo et al. \cite{dP} (and references therein) for
applications of blow-up.

In the case that $b$ is non-decreasing and positive, and $\sigma \equiv 1$,
Feller test is equivalent to Osgood criterion \cite{Os}, as it is proven in
Le\'{o}n and Villa \cite{L-V}. It means, the solution of (\ref{eq:facil})
explodes in finite time if and only if $\int_{x_{0}}^{\infty
}(1/b(s))ds<\infty $. Also, when $\sigma \equiv 0$ and $b>0$, Osgood \cite%
{Os} has stated that explosion time is finite if and only if $%
\int_{x_{0}}^{\infty }(1/b(s))ds<\infty $. In this case, the explosion time
is equals to this integral.

Unfortunately, the distribution of the explosion time of equation (\ref%
{eq:facil}) is not easy to calculate. One way to do it is using linear
second-order ordinary differential equations. Indeed, Feller \cite{Wfe} has
pointed out the Laplace transformation of this distribution is a bounded
solution to some related ordinary differential equations (see Section \ref%
{sec:5.2} below for a generalization of this result). Also some numerical
schemes have been analyzed in order to approximate the time of explosion
(consult D\'avila et al. \cite{Da}). In this paper, in Section \ref%
{subsec:5.1}, we also obtain the partial differential equation that has the
distribution of the explosion time as a bounded solution.

Now consider the nonautonomous stochastic differential equation 
\begin{eqnarray}
dX_{t} &=&b(t,X_{t})dt+\sigma (t,X_{t})dW_{t},\quad t>0,  \label{eq:dificil}
\\
X_{0} &=&x_{0}.  \notag
\end{eqnarray}%
For this equation, Feller test and Osgood criterion are not useful anymore,
but, in the case that $\sigma $ is independent of $x$, we are still able to
associate the Laplace transformation of the distribution of the explosion
time of (\ref{eq:dificil}) with a partial differential equation as Theorem %
\ref{thm:22} below establishes.

The main purpose of this paper is to deal with some extensions of Osgood
criterion for some equations of the form (\ref{eq:dificil}). For instance,
Lemma \ref{lem:n7} provides a better understanding of Theorem 2.1 in \cite%
{Co}, or if, in (\ref{eq:dificil}), $\sigma $ is independent of $x$, we
obtain an extension of Osgood criterion by means of the law of iterated
logarithm and comparison theorems. It is worth mentioning that versions of
these important tools have been used to analyze global solutions of integral
equations as it is done by Constantin \cite{Co}, or to obtain an extension
of Osgood criterion to integral equations with additive noise and with $%
0<b(t,x)=b(x)$ non-decreasing (see Le\'{o}n and Villa \cite{L-V}).

The paper is organized as follows. Our comparison theorem for integral
equations is introduced in Section \ref{sec:3}. Some extensions of Osgood
criterion are given is Sections \ref{sec:2}, \ref{sec:3} and \ref{sec:4}.
Finally, the relation between partial differential equations and finite
blow-up is considered in Section \ref{sec:5}.

\section{Osgood criterion for some stochastic differential equation with
diffusion coefficient}

\label{sec:2}

Let $\sigma:{\mathbb{R}}\rightarrow{\mathbb{R}}$ and $h:{\mathbb{R}}%
\rightarrow{\mathbb{R}}$ be a differentiable function and a continuous
function, respectively. We consider the stochastic differential equation 
\begin{equation}
X_{t}^{\xi }=\xi +\frac{1}{2}\int_{0}^{t}\sigma (X_{s}^{\xi })\sigma
^{\prime }(X_{s}^{\xi })h^2(s)ds+\int_{0}^{t}\sigma (X_{s}^{\xi })
h(s)dW_{s}, \quad t\ge 0,  \label{EDEa}
\end{equation}%
where $\xi\in{\mathbb{R}}$. Here and in what follows, $W=\{W_t: t\ge0\}$ is
a Brownian motion.

Now we assume that there are $-\infty \leq x_{1}<x_{2}\leq \infty $ such
that $\sigma \neq 0$ on $(x_{1},x_{2})$. Let $\xi \in (x_{1},x_{2})$ be
fixed and define $\Psi _{\xi }:(x_{1},x_{2})\rightarrow \mathbb{R}$ as%
\begin{equation*}
\Psi _{\xi }(x)=\int_{\xi }^{x}\frac{dz}{\sigma (z)}.
\end{equation*}%
Set $l_{\xi }=\Psi _{\xi }(x_{1})\wedge \Psi _{\xi }(x_{2})$, $r_{\xi }=\Psi
_{\xi }(x_{1})\vee \Psi _{\xi }(x_{2})$ and $Y_{t}=\int_{0}^{t}h(s)dW_{s}$, $%
t\geq 0.$

The following result is our first extension of Osgood criterion.

\begin{theorem}
\label{the:v1} Let $\tau _{\xi }=\inf \{t\geq 0:Y_{t}\notin (l_{\xi },r_{\xi
})\}$. Then, the process $X_{t}^{\xi }=\{\Psi _{\xi }^{-1}(Y_{t}):0\leq
t<\tau _{\xi }\}$ is a solution of equation (\ref{EDEa}).
\end{theorem}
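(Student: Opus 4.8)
The plan is to recognize equation (\ref{EDEa}) as an equation that is linearized by the Lamperti-type change of variables $\Psi_{\xi}$, and then to verify the claim by a direct application of It\^o's formula to $\Phi(Y_{t})$, where $\Phi=\Psi_{\xi}^{-1}$. First I would record the geometric setup. Since $\sigma$ is differentiable, hence continuous, and $\sigma\neq 0$ on $(x_{1},x_{2})$, the function $\sigma$ keeps a constant sign there; therefore $\Psi_{\xi}'=1/\sigma$ has constant sign, so $\Psi_{\xi}$ is a strictly monotone map of $(x_{1},x_{2})$ onto the open interval $(l_{\xi},r_{\xi})$. In particular $\Psi_{\xi}$ is a diffeomorphism, its inverse $\Phi=\Psi_{\xi}^{-1}$ is well defined and smooth, and since $\Psi_{\xi}(\xi)=0$ we get $\Phi(0)=\xi$, which matches the initial condition $X_{0}^{\xi}=\xi$.

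Next I would compute the two derivatives of $\Phi$ that appear in It\^o's formula. Differentiating the identity $\Psi_{\xi}(\Phi(y))=y$ and using $\Psi_{\xi}'=1/\sigma$ gives $\Phi'(y)=\sigma(\Phi(y))$; differentiating once more yields $\Phi''(y)=\sigma'(\Phi(y))\,\sigma(\Phi(y))$. These are exactly the coefficients demanded by (\ref{EDEa}), which is the reason the change of variables works.

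The core step is the application of It\^o's formula. For $0\le t<\tau_{\xi}$ we have $Y_{t}\in(l_{\xi},r_{\xi})$ by the definition of the exit time $\tau_{\xi}$, so $X_{t}^{\xi}:=\Phi(Y_{t})$ takes values in $(x_{1},x_{2})$ and is well defined. Writing $Y_{t}=\int_{0}^{t}h(s)\,dW_{s}$, so that $dY_{t}=h(t)\,dW_{t}$ and $d\langle Y\rangle_{t}=h^{2}(t)\,dt$, It\^o's formula applied to $\Phi$ gives $dX_{t}^{\xi}=\Phi'(Y_{t})h(t)\,dW_{t}+\tfrac{1}{2}\Phi''(Y_{t})h^{2}(t)\,dt$. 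Substituting the two derivatives computed above and using $\Phi(Y_{t})=X_{t}^{\xi}$ turns this into $dX_{t}^{\xi}=\sigma(X_{t}^{\xi})h(t)\,dW_{t}+\tfrac{1}{2}\sigma(X_{t}^{\xi})\sigma'(X_{t}^{\xi})h^{2}(t)\,dt$, which is precisely the differential form of (\ref{EDEa}); integrating from $0$ to $t$ and using $X_{0}^{\xi}=\xi$ finishes the verification.

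I expect the only delicate points to be technical rather than conceptual. One is ensuring enough smoothness of $\Phi$ to legitimately invoke It\^o's formula: since the statement assumes $\sigma$ merely differentiable, $\Phi''=(\sigma'\sigma)\circ\Phi$ exists but its continuity relies on that of $\sigma'$, so I would either read the hypothesis as $\sigma\in C^{1}$ or appeal to a version of It\^o's formula that only requires $\Phi$ to have a locally bounded second derivative. The other is the localization at $\tau_{\xi}$: the computation above is valid on the stochastic interval $[0,\tau_{\xi})$, and I would make it rigorous by stopping at $\tau_{\xi}\wedge n$ and letting $n\to\infty$, which is harmless because $Y$ remains in compact subintervals of $(l_{\xi},r_{\xi})$ strictly before the exit time, keeping all the coefficients bounded.
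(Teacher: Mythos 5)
Your proposal is essentially the paper's own proof: both apply It\^o's formula to $f=\Psi_{\xi}^{-1}$ evaluated along the martingale $Y_{t}=\int_{0}^{t}h(s)\,dW_{s}$, and your explicit computation of $\Phi'=\sigma\circ\Phi$ and $\Phi''=(\sigma\sigma')\circ\Phi$ is exactly the step the paper leaves implicit. The one technical correction is that the localization must be spatial rather than temporal: the paper stops $Y$ at the exit time $\tau_{\xi}^{k}$ of the shrunken interval $(l_{\xi}+k^{-1},\,r_{\xi}-k^{-1})$ and lets $k\to\infty$, whereas stopping at $\tau_{\xi}\wedge n$ as you suggest would place $Y$ on the boundary of $(l_{\xi},r_{\xi})$ on the event that $\tau_{\xi}\le n$, where $\Psi_{\xi}^{-1}$ and its derivatives need not be defined or bounded, so It\^o's formula could not be invoked up to that time.
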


\begin{remark}
In this case, $\tau _{\xi}$ is called the explosion time of the solution to
equation (\ref{EDEa}).
\end{remark}

\begin{proof}
Applying It\^{o}'s formula with $f(x)=\Psi _{\xi }^{-1}(x)$, $x\in (l_{\xi
},r_{\xi })$ we have%
\begin{equation*}
f(Y_{t\wedge \tau _{\xi }^{k}})-f(0)=\frac{1}{2}\int_{0}^{t\wedge \tau _{\xi
}^{k}}f^{\prime \prime }(Y_{s})h^{2}(s)ds+\int_{0}^{t\wedge \tau _{\xi
}^{k}}f^{\prime }(Y_{s})h(s)dW_{s},  \label{famitoaos}
\end{equation*}%
where%
\begin{equation*}
\tau _{\xi }^{k}=\inf \{t>0:Y_{t}\notin (l_{\xi }+k^{-1},r_{\xi }-k^{-1})\}.
\end{equation*}%
Letting $k\rightarrow \infty $ in (\ref{famitoaos}) we get the result holds. 
$\hfill $
\end{proof}

An immediate consequence of Theorem \ref{the:v1} is the following:

\begin{corollary}
\label{cor:dis} Let $\int_{0}^{\infty }h^{2}(s)ds=\infty $. Then the
solution of equation (\ref{EDEa}) explodes in finite time if and only if
either $l_{\xi }>-\infty $, or $r_{\xi }<\infty $. Moreover, if $l_{\xi }$
and $r_{\xi }$ are two real numbers, then 
\begin{equation*}
P(\tau _{\xi }\in dt)=\sum\limits_{k=-\infty }^{\infty }(-1)^{k}\frac{r_{\xi
}+k(r_{\xi }-l_{\xi })}{\sqrt{2\pi }(H(t))^{3/2}}\exp \left( -\frac{(r_{\xi
}+k(r_{\xi }-l_{\xi }))^{2}}{2H(t)}\right) dt,
\end{equation*}%
with $H(t)=\int_{0}^{t}(h(s))^{2}ds$.
\end{corollary}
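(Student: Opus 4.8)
The plan is to read the whole statement off the first exit of the Wiener integral $Y_t=\int_0^t h(s)\,dW_s$ from $(l_\xi,r_\xi)$, since Theorem~\ref{the:v1} identifies the explosion time of (\ref{EDEa}) with $\tau_\xi=\inf\{t\ge 0:Y_t\notin(l_\xi,r_\xi)\}$. The process $Y$ is a continuous local martingale with $\langle Y\rangle_t=\int_0^t h^2(s)\,ds=H(t)$, so by the Dambis--Dubins--Schwarz theorem there is a standard Brownian motion $B$, possibly on an enlarged space, with $Y_t=B_{H(t)}$; the standing assumption $\int_0^\infty h^2(s)\,ds=\infty$ gives $H(\infty)=\infty$, so $B$ is defined on all of $[0,\infty)$. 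Writing $S=\inf\{u\ge 0:B_u\notin(l_\xi,r_\xi)\}$ for the exit time of $B$ (with $B_0=0$), continuity and monotonicity of $H$ yield $H(\tau_\xi)=S$, i.e. $\tau_\xi=H^{-1}(S)$, and the corollary becomes a statement about $S$.

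For the dichotomy: if $l_\xi=-\infty$ and $r_\xi=\infty$ then $(l_\xi,r_\xi)=\mathbb{R}$, the process $Y$ never leaves it and $\tau_\xi=\infty$ almost surely, so there is no blow-up. If instead at least one endpoint is finite, a one-dimensional Brownian motion reaches that finite level in finite time almost surely, so $S<\infty$ a.s.; since $H(\infty)=\infty$ there is then a finite $t$ with $H(t)=S$, giving $\tau_\xi=H^{-1}(S)<\infty$ a.s. This is the claimed equivalence, and it shows the blow-up is almost sure whenever it can occur; the hypothesis $H(\infty)=\infty$ is exactly what keeps the clock from running out before $B$ can hit a finite boundary.

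For the density I would compute the law of $S$ by the method of images. With $L=r_\xi-l_\xi$ and $g_u(z)=(2\pi u)^{-1/2}e^{-z^2/2u}$, reflecting the source at $0$ repeatedly across the two absorbing barriers with alternating signs produces the killed sub-density $q(u,y)=\sum_{k\in\mathbb{Z}}\left(g_u(y-2kL)-g_u(y-2r_\xi-2kL)\right)$, which vanishes at $y=l_\xi$ and $y=r_\xi$. The law of $S$ is the net probability flux, $f_S(u)=\tfrac12\,\partial_y q(u,l_\xi)-\tfrac12\,\partial_y q(u,r_\xi)$, obtained from $\partial_u q=\tfrac12\partial_{yy}q$ and $P(S>u)=\int_{l_\xi}^{r_\xi}q\,dy$; differentiating the Gaussians and merging the even and odd reflected families collapses this into the single alternating series $f_S(u)=\sum_{k\in\mathbb{Z}}(-1)^k(r_\xi+kL)(2\pi u^3)^{-1/2}\exp\!\left(-(r_\xi+kL)^2/2u\right)$, whose $k=0$ and $k=-1$ terms are the L\'evy first-passage densities to the right and left endpoints. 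Transporting back to real time via $P(\tau_\xi\le t)=P(S\le H(t))$ and differentiating gives the density $f_S(H(t))\,H'(t)$ against $dt$, with $H'(t)=h^2(t)$; here $f_S(H(t))$ is exactly the series displayed in the corollary, and one checks $\int_0^\infty f_S(H(t))H'(t)\,dt=\int_0^\infty f_S(u)\,du=1$, consistent with the almost-sure explosion found above.

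I expect the reflection step to be the crux: locating the image charges and their signs so that $q$ genuinely vanishes at both endpoints, justifying termwise differentiation of the theta-like series (uniform convergence on compact sets of $u$ bounded away from $0$), and checking that the boundary flux reorganizes into precisely the alternating sum over all of $\mathbb{Z}$. By contrast the time-change reduction and the finiteness dichotomy are routine once $\tau_\xi=H^{-1}(S)$ is in hand; the only delicate point there is to use the right-continuous inverse of $H$ if $h$ vanishes on a set of positive Lebesgue measure, which leaves the distributional identities unchanged.
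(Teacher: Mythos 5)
Your proposal is correct and follows essentially the same route as the paper: identify $\tau_\xi$ via Theorem \ref{the:v1} with the exit time of $Y_t=\int_0^t h(s)\,dW_s$ from $(l_\xi,r_\xi)$, time-change $Y_t=B_{H(t)}$ by Dambis--Dubins--Schwarz, and reduce to the two-sided exit time of a standard Brownian motion, using $H(\infty)=\infty$ for the finiteness dichotomy. The only difference is that the paper simply cites Borodin and Salminen for the alternating-series exit density, whereas you sketch its derivation by the method of images; that extra work is sound but not needed for the argument.
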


\begin{proof}
It is well-known that there is a Brownian motion $B=\{B_t:t\ge 0\}$ such
that $Y_{t}=B_{H(t)}$, $t\ge0$, (see, for instance, Durrett \cite%
{Du}). Let ${\tilde\tau}_{\xi}=\inf \{t>0: B_t\notin(l_{\xi},r_{\xi})\}$.
Then, it is easy to show that $P(\tau_{\xi}\le t)= P({\tilde\tau}_{\xi}\le
H(t))$. Consequently, the proof follows from Borodin and Salminen \cite{B-S}
(page 212).$\hfill$
\end{proof}

\begin{remark}
Suppose that, for example, $\sigma >0$, $\Psi _{\xi }(x_{1})=-\infty $ and $%
\Psi _{\xi }(x_{2})<\infty $. Then, as an immediate consequence of the proof
of Corollary \ref{cor:dis}, we get that $\tau _{\xi }=\inf
\{t:\int_{0}^{t}h(s)dW_{s}=\Psi _{\xi }(x_{2})\}$ and 
\begin{equation}
P(\tau _{\xi }\leq t)=\Phi \left( \frac{\Psi _{\xi }(x_{2})}{\sqrt{H(t)}}%
\right) ,  \label{disttiemplleBro}
\end{equation}%
where 
\begin{equation*}
\Phi (x)=\frac{2}{\sqrt{2\pi }}\int_{x}^{\infty }e^{-z^{2}/2}dz.
\end{equation*}%
Observe that we get a similar result when $\sigma $ is negative, or the
involved interval has the form $(l_{\xi },\infty ).$
\end{remark}

Now we illustrate this remark with two examples.

\begin{example}
\label{ExamOSG1}Let $\sigma (x)=|x|^{\alpha }$, $x\in \mathbb{R}$, $\alpha
>1 $ and $\xi \in \mathbb{R}$. Then

\begin{equation*}
\Psi _{\xi }(x)=\left\{ 
\begin{array}{cc}
\frac{1}{1-\alpha }(|x|^{1-\alpha }-|\xi|^{1-\alpha }), & \xi>0,\ x\ge0, \\ 
\frac{1}{1-\alpha }(|\xi|^{1-\alpha }-|x|^{1-\alpha }), & \xi<0,\ x\le 0.%
\end{array}%
\right.
\end{equation*}%
Hence, 
\begin{equation*}
\Psi_{\xi}(-\infty)=\frac{|\xi|^{1-\alpha}}{1-\alpha}\quad \hbox{\rm and}%
\quad \Psi_{\xi}(0)=\infty,\quad\hbox{\rm for}\ \xi<0,
\end{equation*}
and 
\begin{equation*}
\Psi_{\xi}(\infty)=\frac{|\xi|^{1-\alpha}}{\alpha-1}\quad \hbox{\rm and}%
\quad \Psi_{\xi}(0)=-\infty,\quad\hbox{\rm for}\ \xi>0.
\end{equation*}

Therefore, there is explosion in finite time and 
\begin{equation*}
P(\tau _{\xi }\leq t)=\Phi \left( \frac{|\xi |^{1-\alpha }}{(\alpha -1)\sqrt{%
H(t)}}\right) .
\end{equation*}
\end{example}

\begin{example}
\label{ExamOSG2}Let $\sigma (x)=e^{\alpha x}$, $x\in \mathbb{R}$, $\alpha
\neq 0$ and $\xi \in \mathbb{R}$. Then%
\begin{equation*}
\Psi _{\xi }(x)=\frac{1}{\alpha }(e^{-\alpha \xi }-e^{-\alpha x}),
\end{equation*}%
\begin{equation*}
\Psi _{\xi }(-\infty )=\left\{ 
\begin{array}{ll}
-\infty , & \alpha >0, \\ 
\frac{1}{\alpha }e^{-\alpha \xi }, & \alpha <0,%
\end{array}%
\right. \text{ \ \ and \ \ }\Psi _{\xi }(\infty )=\left\{ 
\begin{array}{ll}
\frac{1}{\alpha }e^{-\alpha \xi }, & \alpha >0, \\ 
\infty , & \alpha <0.%
\end{array}%
\right.
\end{equation*}%
Thus we deduce that there is explosion on the left for $\alpha <0$, there is
explosion on the right for $\alpha >0$ and 
\begin{equation*}
P(\tau _{\xi }\leq t)=\Phi \left( \frac{e^{-\alpha \xi }}{|\alpha |\sqrt{H(t)%
}}\right) .
\end{equation*}
\end{example}

\section{An extension of Osgood criterion for integral equations}

\label{sec:3}

In this section we generalize recent results obtained in \cite{M-J-J} and 
\cite{L-V}. Now we study the following nonautonomous integral equation 
\begin{equation}
X_{t}^{\xi }=\xi +\int_{0}^{t}a(s)b(X_{s}^{\xi })ds+g(t), \quad t\ge0.
\label{eqdnhomo}
\end{equation}
The explosion time $T_{\xi}^X$ of this equation is defined as $%
T_{\xi}^X=\inf \{t\ge 0: X_t^{\xi}\notin {\mathbb{R}}\}$. In the remaining
of this paper we will need the following conditions:

\begin{description}
\item[H1:] $a:(0,\infty )\rightarrow (0,\infty )$ is a continuous function
such that 
\begin{equation*}
\lim_{t\rightarrow \infty }\int_{t}^{t+\eta }a(s)ds>0, \quad%
\hbox{\rm for
some}\ \eta>0.
\end{equation*}

\item[H2:] $b:\mathbb{R}\rightarrow \lbrack 0,\infty )$ is a continuous
function such that there exist $-\infty \leq l<\infty $ and $-\infty
<r<\infty $ satisfying that $b>0$ and locally Lipschitz on $(l,\infty )$,
and $b:[r,\infty )\rightarrow (0,\infty )$ is non-decreasing.

\item[H3:] $g:[0,\infty )\rightarrow \mathbb{R}$ is a continuous function
such that 
\begin{equation*}
\limsup_{t\rightarrow \infty }\left( \inf_{0\leq h\leq {\tilde{\eta}}%
}g(t+h)\right) =\infty ,\quad \hbox{\rm for some}\ {\tilde{\eta}}>0.
\end{equation*}
\end{description}

Henceforth we utilize the convention 
\begin{equation*}
A_t(x)=\int_{t}^{x}a(z)dz,\quad t\ge0\ \hbox{\rm and }x\in(t,\infty ),
\end{equation*}
and 
\begin{equation*}
B_{\xi }(x)=\int_{\xi }^{x}\frac{dz}{b(z)},\quad x\in (l,\infty).
\end{equation*}

We begin with the following generalization of Osgood criterion.

\begin{lemma}
\label{lem:n7} Let \textbf{H1} and \textbf{H2} be satisfied and $x_{0}>l$.
Consider the ordinary differential equation 
\begin{eqnarray}
\frac{dy(t)}{dt} &=&a(t)b(y(t))dt,\ \ t>t_0,  \label{inrtosg} \\
y(t_0) &=&x_{0}.  \notag
\end{eqnarray}

\begin{itemize}
\item[a)] Assume that $B_{x_{0}}(\infty )\geq A_{t_0}(\infty )$, then 
\begin{equation*}
y(t)=B_{x_{0}}^{-1}(A_{t_0}(t)),\ \ t\ge t_0.
\end{equation*}

\item[b)] If $B_{x_{0}}(\infty )<A_{t_0}(\infty )$, then there is blow up in
finite time and the time of explosion $T_{x_{0}}^{y}$ is equal to $%
A_{t_0}^{-1}(B_{x_{0}}(\infty ))$.
\end{itemize}
\end{lemma}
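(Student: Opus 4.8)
The plan is to solve \eqref{inrtosg} by separation of variables and then read both cases off the invertibility of $A_{t_0}$ and $B_{x_0}$. First I would record the monotonicity facts I need. By \textbf{H2} the coefficient $b$ is positive and locally Lipschitz on $(l,\infty)$, and $x_{0}>l$, so \eqref{inrtosg} has a unique maximal solution; since $a>0$ by \textbf{H1} and $b(y)>0$, this solution satisfies $y'>0$ and hence $y(t)\ge x_{0}>l$ for every $t$ in its interval of existence, so it never leaves the region where $b$ is positive and Lipschitz. (Only the positivity and local Lipschitz property of $b$ are used; the monotonicity part of \textbf{H2} plays no role in this lemma.) Consequently $B_{x_{0}}$ is a strictly increasing continuous bijection of $[x_{0},\infty)$ onto $[0,B_{x_{0}}(\infty))$, and $A_{t_0}$ is a strictly increasing continuous bijection of $[t_0,\infty)$ onto $[0,A_{t_0}(\infty))$.

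Next I would divide \eqref{inrtosg} by $b(y(t))$ and integrate from $t_0$ to $t$; the substitution $z=y(s)$ converts the left-hand side into $B_{x_{0}}(y(t))$, yielding the master relation
\begin{equation*}
B_{x_{0}}(y(t))=A_{t_0}(t),\qquad t_0\le t<T_{x_{0}}^{y}.\tag{$\ast$}
\end{equation*}
I would also note at this point that \textbf{H1} forces $A_{t_0}(\infty)=\infty$: the condition $\lim_{t\to\infty}\int_t^{t+\eta}a(s)\,ds>0$ bounds the integral of $a$ over each successive block of length $\eta$ below by a fixed positive constant, so summing the blocks diverges.

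For part (a), since $A_{t_0}(\infty)=\infty$ the hypothesis $B_{x_{0}}(\infty)\ge A_{t_0}(\infty)$ means $B_{x_{0}}(\infty)=\infty$. Then for every $t\ge t_0$ the finite value $A_{t_0}(t)$ lies in the range $[0,\infty)$ of $B_{x_{0}}$, so $(\ast)$ inverts to $y(t)=B_{x_{0}}^{-1}(A_{t_0}(t))$ for all $t\ge t_0$, and the solution is global. For part (b), $B_{x_{0}}(\infty)<A_{t_0}(\infty)=\infty$ gives $B_{x_{0}}(\infty)<\infty$, so the range of $B_{x_{0}}$ is the bounded interval $[0,B_{x_{0}}(\infty))$. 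From $(\ast)$ we have $A_{t_0}(t)=B_{x_{0}}(y(t))<B_{x_{0}}(\infty)$ throughout existence, while $B_{x_{0}}(y(t))\uparrow B_{x_{0}}(\infty)$ as $y(t)\uparrow\infty$; hence $A_{t_0}(t)\uparrow B_{x_{0}}(\infty)$. Because $A_{t_0}$ is a bijection onto $[0,\infty)$ and $B_{x_{0}}(\infty)$ lies in that range, there is a unique finite $T^{\ast}=A_{t_0}^{-1}(B_{x_{0}}(\infty))$; the solution $y=B_{x_{0}}^{-1}\circ A_{t_0}$ exists on $[t_0,T^{\ast})$ and tends to $\infty$ as $t\uparrow T^{\ast}$, so $T_{x_{0}}^{y}=A_{t_0}^{-1}(B_{x_{0}}(\infty))$.

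The main obstacle is the bookkeeping at the explosion endpoint rather than the integration itself. I must be sure that $A_{t_0}$ is not merely increasing but surjective onto a half-line reaching beyond $B_{x_{0}}(\infty)$ — this is exactly where \textbf{H1} enters — and I must justify the limit exchange $B_{x_{0}}(y(t))\to B_{x_{0}}(\infty)$ as $y(t)\to\infty$, i.e.\ the monotone convergence of the improper integral defining $B_{x_{0}}$, in order to identify the blow-up time exactly instead of only bounding it.
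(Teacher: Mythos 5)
Your proposal is correct and follows essentially the same route as the paper: separate variables to obtain the relation $B_{x_{0}}(y(t))=A_{t_0}(t)$ and then read off each case from the range of $B_{x_{0}}$ versus the values of $A_{t_0}$. The extra details you supply (that \textbf{H1} forces $A_{t_0}(\infty)=\infty$, and the endpoint bookkeeping identifying the blow-up time) are sound refinements of the same argument, which the paper states more tersely.
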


\begin{remark}
Observe that equation (\ref{inrtosg}) (resp. equation (\ref{eqdnhomo})) has
a unique solution for $x_0>l$ (resp. for $\xi>l$) that may explode in finite
time because of Hypotheses \textbf{H1} and \textbf{H2} (resp. \textbf{H1}-%
\textbf{H3}). This fact will be used in the proof of Theorem \ref%
{TheOsnoiGral} below without mentioning
\end{remark}

\begin{proof}
From (\ref{inrtosg}) we see that 
\begin{equation*}
\int_{t_0}^{t}\frac{y^{\prime }(s)}{b(y(s))}ds=\int_{t_0}^{t}a(s)ds.
\end{equation*}%
The change of variable $z=y(s)$ yields $B_{x_{0}}(y(t))= A_{t_0}(t)$.

Now we deal with Statement a). If $B_{x_{0}}(\infty )\geq A_{t_0}(\infty )$,
then $B_{x_{0}}(\infty )>A_{t_0}(t)$, for all $t>t_0$. Therefore $%
y(t)=B_{x_{0}}^{-1}(A_{t_0}(t)),$ $t>t_0$ is well-defined.

Finally we consider Statement b). In this case we have $%
B_{x_{0}}^{-1}(A_{t_0}(t))$ is only defined for $t<A_{t_0}^{-1}(B_{x_{0}}(%
\infty ))<\infty .\hfill $
\end{proof}

\bigskip

Also we are going to need the following elementary comparison result.

\begin{lemma}
\label{DesLemma}Let $x_{0}>r$ and $T>t_0$. Assume that \textbf{H1} and 
\textbf{H2} are satisfied, and that $u,v:[t_0,T] \rightarrow{\mathbb{R}}$
are two continuous functions.

\begin{itemize}
\item[a)] Suppose that $u$ and $v$ are such that 
\begin{eqnarray*}
v(t) &>&x_{0}+\int_{t_{0}}^{t}a(s)b(v(s))ds,\quad t\in \lbrack t_{0},T], \\
u(t) &=&x_{0}+\int_{t_{0}}^{t}a(s)b(u(s))ds,\quad t\in \lbrack t_{0},T].
\end{eqnarray*}%
Then $v(t)\geq u(t)$, for all $t\in \lbrack t_{0},T]$.

\item[b)] If 
\begin{eqnarray*}
r\ <\ v(t) &<&x_{0}+\int_{t_{0}}^{t}a(s)b(v(s))ds,\quad t\in \lbrack
t_{0},T], \\
u(t) &=&x_{0}+\int_{t_{0}}^{t}a(s)b(u(s))ds,\quad t\in \lbrack t_{0},T].
\end{eqnarray*}%
Then $v(t)\leq u(t)$, for all $t\in \lbrack t_{0},T]$.
\end{itemize}
\end{lemma}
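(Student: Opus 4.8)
The plan is to treat both statements by the same first-crossing argument, exploiting the strict inequalities in the hypotheses together with the monotonicity of $b$ on $[r,\infty)$ granted by \textbf{H2}. The essential preliminary observation is that both trajectories remain in the region $[r,\infty)$ where $b$ is non-decreasing: since $a>0$ and $b\ge0$, the integral term defining $u$ is non-negative, so $u(t)\ge x_0>r$ for every $t\in[t_0,T]$; the same non-negativity forces $v(t)>x_0>r$ in part a), while in part b) the lower bound $v(t)>r$ is assumed outright. This is the point that requires care, because \textbf{H2} only provides monotonicity of $b$ above $r$, so one cannot compare $b(v)$ and $b(u)$ without first confining both functions to $[r,\infty)$.

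For part a) I would argue by contradiction. At $t=t_0$ we have $v(t_0)>x_0=u(t_0)$, so the relatively open set $S=\{t\in[t_0,T]:v(t)<u(t)\}$ does not contain $t_0$. If $S$ were nonempty, set $t_1=\inf S$; by continuity and the openness of $S$ one gets $v(t_1)=u(t_1)$ and $v(s)\ge u(s)$ for all $s\in[t_0,t_1]$. Because both $v$ and $u$ lie in $[r,\infty)$ on this interval and $b$ is non-decreasing there, $b(v(s))\ge b(u(s))$, whence
\begin{equation*}
v(t_1)>x_0+\int_{t_0}^{t_1}a(s)b(v(s))\,ds\ge x_0+\int_{t_0}^{t_1}a(s)b(u(s))\,ds=u(t_1),
\end{equation*}
contradicting $v(t_1)=u(t_1)$. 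Hence $S=\emptyset$ and $v\ge u$ on $[t_0,T]$.

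Part b) is the mirror image. Here $v(t_0)<x_0=u(t_0)$, so I would instead consider $S=\{t:v(t)>u(t)\}$, take $t_1=\inf S$, and obtain $v(t_1)=u(t_1)$ with $v(s)\le u(s)$ on $[t_0,t_1]$. Monotonicity of $b$ on $[r,\infty)$ now yields $b(v(s))\le b(u(s))$, so
\begin{equation*}
v(t_1)<x_0+\int_{t_0}^{t_1}a(s)b(v(s))\,ds\le x_0+\int_{t_0}^{t_1}a(s)b(u(s))\,ds=u(t_1),
\end{equation*}
again contradicting $v(t_1)=u(t_1)$, so $v\le u$ throughout.

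The argument is elementary and, notably, uses neither the local Lipschitz property of $b$ nor any Gronwall-type estimate: the strict inequalities in the hypotheses are exactly what make the first-crossing contradiction immediate. The only genuine subtlety, and the step I would be most careful to spell out, is the a priori confinement of both $u$ and $v$ to $[r,\infty)$, since the comparison $b(v(s))\ge b(u(s))$ (resp. $\le$) is valid only where $b$ is known to be monotone.
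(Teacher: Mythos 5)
Your proof is correct and follows essentially the same route as the paper's: a first-crossing (maximal-interval) contradiction that combines the strict inequality in the hypothesis with the monotonicity of $b$ on $[r,\infty)$; the paper tracks the set where $b(u(s))\leq b(v(s))$ while you track the set where $v<u$, which is only a cosmetic difference. If anything, you are more explicit than the paper about the one delicate point, namely that $u(t)\geq x_{0}>r$ and $v(t)>r$ a priori, so that the comparison of $b(v)$ with $b(u)$ is legitimate.
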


\begin{proof}
We first deal with Statement a). Let $N=\{t\geq t_0:b(u(s))\leq b(v(s)),\
s\in \lbrack t_0,t]\}.$ Since $t_0\in N$, then the continuity of of $v$ and $%
u$, together with the fact that $b$ is non-decreasing on $(r,\infty)$, leads
us to show that $\tilde{T}=\sup N>t_0.$ If $\tilde{T}<T $ then 
\begin{equation*}
v(\tilde{T})-u(\tilde{T})>\int_{t_0}^{\tilde{T}}a(s)[b(v(s))-b(u(s))]ds \ge0,
\end{equation*}%
which is impossible due to the definition of ${\tilde T}$.

Finally, we proceed similarly to prove that b) is also true and to finish
the proof.$\hfill $
\end{proof}

\begin{theorem}
\label{TheOsnoiGral}Let $\xi \in{\mathbb{R}}$. Assume \textbf{H1}-\textbf{H3}%
. Then the explosion time $T_{\xi }^{X}$ of the solution $X^{\xi }$ of (\ref%
{eqdnhomo}) is finite if and only if 
\begin{equation}
\int_{r}^{\infty }\frac{ds}{b(s)}<\infty .  \label{conexisosgoog}
\end{equation}
\end{theorem}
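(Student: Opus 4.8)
The plan is to prove the two directions separately, reducing the behavior of the noisy equation (\ref{eqdnhomo}) to the deterministic equation (\ref{inrtosg}) via the comparison Lemma \ref{DesLemma}, and then invoking the Osgood-type dichotomy from Lemma \ref{lem:n7}. The guiding idea is that condition (\ref{conexisosgoog}), namely $\int_r^\infty ds/b(s)<\infty$, is exactly the condition ``$B_\xi(\infty)<\infty$'', so by Lemma \ref{lem:n7} this is what drives finite-time blow-up in the driftlike dynamics; the continuous perturbation $g$ should not destroy this, provided we can control it on the relevant time intervals.

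For the \emph{if} direction, suppose (\ref{conexisosgoog}) holds. Using \textbf{H3}, I would first find a (random or deterministic, here deterministic) time $t_0$ and a level beyond $r$ at which $\inf_{0\le h\le\tilde\eta}g(t_0+h)$ is large, so that on $[t_0,t_0+\tilde\eta]$ the process $X^\xi$ stays above $r$ and in fact satisfies $X^\xi_t \ge X^\xi_{t_0} + \int_{t_0}^t a(s)b(X^\xi_s)\,ds + (\text{positive contribution from }g)$. The aim is to bound $X^\xi$ below by a solution $u$ of the purely deterministic equation (\ref{inrtosg}) started at a large $x_0>r$ at time $t_0$; Lemma \ref{DesLemma}(a) gives $X^\xi_t\ge u(t)$. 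Since $\int_r^\infty ds/b(s)<\infty$ and by choosing $x_0$ large enough (equivalently making $B_{x_0}(\infty)$ small) while using \textbf{H1} to guarantee $A_{t_0}(\infty)=\infty$, we land in case (b) of Lemma \ref{lem:n7}, so the deterministic minorant $u$ explodes in finite time; hence $X^\xi$ does too, giving $T_\xi^X<\infty$.

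For the \emph{only if} direction, I would argue the contrapositive: assume $\int_r^\infty ds/b(s)=\infty$, i.e. $B_\xi(\infty)=\infty$, and show $T_\xi^X=\infty$ almost surely. On any finite interval $[t_0,T]$ where $X^\xi$ stays above $r$, I would use $g$'s continuity (hence boundedness on $[t_0,T]$) to dominate $X^\xi$ from above by a solution $u$ of a deterministic equation of type (\ref{inrtosg}), this time via Lemma \ref{DesLemma}(b) after absorbing the bounded perturbation into the initial value $x_0$. Because $B_{x_0}(\infty)=\infty \ge A_{t_0}(\infty)$, case (a) of Lemma \ref{lem:n7} applies and the deterministic majorant $u(t)=B_{x_0}^{-1}(A_{t_0}(t))$ is finite for all $t$, so $X^\xi$ cannot blow up in finite time; thus $T_\xi^X=\infty$.

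The main obstacle I expect is the bookkeeping around the perturbation $g$ and the region $\{X^\xi>r\}$. The comparison lemma only applies where $b$ is monotone (on $(r,\infty)$), so I must first show that the solution actually reaches and stays in this region: this is precisely where \textbf{H3}, which forces $g$ (and hence $X^\xi$) to become and remain large along a subsequence of times separated by gaps of length $\tilde\eta$, is essential. The delicate point is converting the integral inequality with the additive $g$-term into one of the exact form required by Lemma \ref{DesLemma} (no $g$ term, only $x_0+\int a(s)b(\cdot)\,ds$); this is done by shifting the base point $x_0$ and the start time $t_0$ so that the contribution of $g$ is folded into the constant, and by invoking \textbf{H1} to ensure $A_{t_0}(\infty)=\infty$ so that the deterministic comparison solution genuinely reflects the Osgood dichotomy.
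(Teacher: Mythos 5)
Your overall strategy coincides with the paper's: reduce (\ref{eqdnhomo}) to the deterministic Osgood equation (\ref{inrtosg}) by the two-sided comparison of Lemma \ref{DesLemma} and then apply the dichotomy of Lemma \ref{lem:n7}. Your treatment of the direction ``$\int_r^\infty ds/b(s)=\infty$ implies no explosion'' is essentially the paper's first half (there stated as its contrapositive): absorb the locally bounded perturbation $g$ into the initial datum, dominate $X^\xi$ from above by a deterministic solution via Lemma \ref{DesLemma}(b), and invoke Lemma \ref{lem:n7}(a). That part is sound, modulo the routine check that near a putative explosion time $X^\xi$ must lie above $r$ (it can only blow up to $+\infty$, since $b\ge 0$ and $g$ is bounded on compacts).

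The gap is in your ``if'' direction. The lower comparison $X^\xi_t\ge u(t)$ with the deterministic minorant started at $x_0=\xi+\inf_{0\le h\le\tilde\eta}g(t_0+h)-1$ is only valid on the window $[t_0,t_0+\tilde\eta]$, because \textbf{H3} controls $g$ from below only on such windows along a sequence $t_n\uparrow\infty$; outside the window $g$ may dip arbitrarily and the hypothesis of Lemma \ref{DesLemma}(a) is lost. Consequently ``the minorant $u$ explodes in finite time, hence $X^\xi$ does too'' is not justified: by Lemma \ref{lem:n7}(b) the minorant explodes at $A_{t_0}^{-1}(B_{x_0}(\infty))$, which may well exceed $t_0+\tilde\eta$, and beyond that point you know nothing about $X^\xi$ versus $u$. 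The fact $A_{t_0}(\infty)=\infty$, which is what you extract from \textbf{H1}, is irrelevant to this; what is needed is the quantitative inequality $B_{x_0}(\infty)<\int_{t_0}^{t_0+\tilde\eta}a(s)\,ds$, which forces $u$ to explode \emph{inside} the window. This is exactly where the full strength of \textbf{H1} enters: it keeps $\int_{t_n}^{t_n+\tilde\eta}a(s)\,ds$ bounded away from zero along the sequence, while \textbf{H3} drives $x_0(n)=\xi+\inf_{0\le h\le\tilde\eta}g(t_n+h)-1$ to $+\infty$ and hence, under (\ref{conexisosgoog}), drives $B_{x_0(n)}(\infty)$ to $0$. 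The paper runs this as a contrapositive (if $X^\xi$ never explodes, then $B_{x_0(n)}(\infty)>\int_{t_n}^{t_n+\tilde\eta}a(s)\,ds$ for every $n$, which contradicts (\ref{conexisosgoog}) in the limit), but either way the comparison of these two quantities along the whole sequence $t_n$ is the missing step; a single window together with $A_{t_0}(\infty)=\infty$ does not suffice.
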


\begin{proof}
Suppose that $T_{\xi }^{X}<\infty $. Since $g$ is continuous, then 
\begin{equation*}
\int_{0}^{t}a(s)b(X_{s}^{\xi })ds\left\{ 
\begin{array}{cc}
<\infty , & t<T_{\xi }^{X}, \\ 
=\infty , & t=T_{\xi }^{X}.%
\end{array}%
\right.
\end{equation*}%
Hence, there is $t_{0}\in (0,T_{\xi }^{X})$ such that 
\begin{equation*}
\xi +\int_{0}^{t_{0}}a(s)b(X_{s}^{\xi })ds+\inf_{s\in \lbrack 0,T_{\xi
}^{X}]}g(s)>r,
\end{equation*}%
and consequently $X_{t}>r$ for $t\in \lbrack t_{0},T_{\xi }^{X}]$.

Now set 
\begin{equation*}
M=\sup \{|g(t)|:0\leq t\leq T_{\xi
}^{X}\}+\xi+\int_0^{t_0}a(s)b(X_s^{\xi})ds .
\end{equation*}%
This yields%
\begin{equation*}
X_{t}^{\xi }< M+1+\int_{t_0}^{t}a(s)b(X_{s}^{\xi })ds,\ \ t\in \lbrack
t_0,T_{\xi }^{X}].
\end{equation*}%
On the other hand, we consider the integral equation%
\begin{equation*}
u(t)=(M+1)+\int_{t_0}^{t}a(s)b(u(s))ds,\ \ t\geq t_0.
\end{equation*}%
Because $M>r$, Lemmas \ref{lem:n7} and \ref{DesLemma} give $T_{M+1}^{u}=
A_{t_0}^{-1}(B_{M+1}(\infty ))\leq T_{\xi }^{X}<\infty $. Whence%
\begin{equation*}
\int_{M+1}^{\infty }\frac{ds}{b(s)}<\infty .
\end{equation*}%
The continuity and positivity of $b$ in $[r,\infty )$ implies (\ref%
{conexisosgoog}).

Reciprocally, suppose that $X^{\xi }$ does not explodes in finite time. From
Hypotheses \textbf{H1} and \textbf{H3}, we can find a sequence $\{t_{n}: n\in%
{\mathbb{N}}\}$ such that $t_{n}\uparrow \infty $ and 
\begin{equation*}
r+1<\xi +\inf_{0\leq h\leq {\tilde \eta} }g(t_{n}+h)\uparrow \infty \text{,\
\ as }n\rightarrow \infty.
\end{equation*}
Observe that 
\begin{equation*}
X_{t+t_{n}}^{\xi }>\xi +\inf_{0\leq h\leq {\tilde\eta} }g(t_{n}+h)-1+%
\int_{0}^{t}a(s+t_{n})b(X_{s+t_{n}}^{\xi })ds,\ \ t\in \lbrack 0,{\tilde\eta}
].
\end{equation*}%
Now consider the integral equation%
\begin{equation*}
u(t)=\xi +\inf_{0\leq h\leq {\tilde\eta} }g(t_{n}+h)-1+%
\int_{0}^{t}a(s+t_{n})b(u(s))ds,\ \ t\in[0,{\tilde\eta}].
\end{equation*}%
Therefore Lemmas \ref{lem:n7} and \ref{DesLemma} yield%
\begin{equation*}
\int_{\xi +\inf_{0\leq h\leq {\tilde\eta} }g(t_{n}+h)-1}^{\infty }\frac{ds}{%
b(s)}>\int_{t_{n}}^{t_n+{\tilde \eta} }a(s)ds.
\end{equation*}%
Whence \textbf{H1} implies $\int_{r}^{\infty }\frac{ds}{b(s)}=\infty .\hfill 
$
\end{proof}

We finish this section with the following result for bounded noise.

\begin{proposition}
\label{BNoise} Assume that Hypotheses \textbf{H1} and \textbf{H2} are true.
Also assume that $g$ in equation (\ref{eqdnhomo}) is a bounded function and
that $\xi +\inf_{s\geq 0}g(s)>r$. Then, we have the following statements:

\begin{itemize}
\item[a)] $\int_r^{\infty} (1/b(s))ds=\infty$ implies that the solution of
equation (\ref{eqdnhomo}) does not explode in finite time.

\item[b)] $\int_r^{\infty} (1/b(s))ds<\infty$ yields that the solution of
equation (\ref{eqdnhomo}) blows up in finite time and 
\begin{equation*}
T_{\xi }^{X}\in (A_0^{-1}(B_{\xi +\sup_{s\geq 0}g(s)}(\infty
)),A^{-1}_0(B_{\xi +\inf_{s\geq 0}g(s)}(\infty ))).
\end{equation*}
\end{itemize}
\end{proposition}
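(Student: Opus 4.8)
The plan is to sandwich the solution $X^{\xi}$ of (\ref{eqdnhomo}) between two solutions of the autonomous-forcing integral equation handled in Lemma \ref{lem:n7}, and then read off the explosion time from that lemma. Write $m=\inf_{s\ge 0}g(s)$ and $M=\sup_{s\ge 0}g(s)$, both finite because $g$ is bounded. Since $a>0$ and $b\ge 0$, the integral term in (\ref{eqdnhomo}) is nonnegative, so $X_{t}^{\xi}\ge \xi+g(t)\ge \xi+m>r$ for every $t\in[0,T_{\xi}^{X})$; hence the solution never leaves the region $(r,\infty)$ where, by \textbf{H2}, $b$ is positive and non-decreasing, which is exactly the setting in which Lemma \ref{DesLemma} applies. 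I would also record at the outset that the condition in \textbf{H1} forces $A_{0}(\infty)=\infty$ (the tail integral of $a$ diverges), so that Lemma \ref{lem:n7}(b) becomes available as soon as the relevant $b$-integral is finite.

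Next I would build the comparison functions. From $\xi+m\le \xi+g(t)\le \xi+M$ the solution obeys the two integral inequalities $X_{t}^{\xi}\ge \xi+m+\int_{0}^{t}a(s)b(X_{s}^{\xi})\,ds$ and $X_{t}^{\xi}\le \xi+M+\int_{0}^{t}a(s)b(X_{s}^{\xi})\,ds$. Let $u^{-}$ and $u^{+}$ be the solutions of (\ref{eqdnhomo}) with $g$ replaced by the constants $m$ and $M$, i.e. with initial values $\xi+m$ and $\xi+M$; by Lemma \ref{lem:n7} these are $u^{-}(t)=B_{\xi+m}^{-1}(A_{0}(t))$ and $u^{+}(t)=B_{\xi+M}^{-1}(A_{0}(t))$, with explosion times $A_{0}^{-1}(B_{\xi+m}(\infty))$ and $A_{0}^{-1}(B_{\xi+M}(\infty))$ whenever these are finite. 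To turn the two non-strict inequalities above into the strict hypotheses required by Lemma \ref{DesLemma}, I would shift the constants by $\mp\varepsilon$: applying part (a) with initial value $\xi+m-\varepsilon$ and part (b) with initial value $\xi+M+\varepsilon$ (legitimate since $\xi+m>r$) gives $u_{\varepsilon}^{-}\le X^{\xi}\le u_{\varepsilon}^{+}$ on the common interval, and letting $\varepsilon\downarrow 0$ yields $u^{-}(t)\le X_{t}^{\xi}\le u^{+}(t)$.

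With the sandwich in hand the two statements follow. For (a), if $\int_{r}^{\infty}ds/b(s)=\infty$ then $B_{\xi+M+\varepsilon}(\infty)=\infty$, so by Lemma \ref{lem:n7}(a) the upper comparison $u_{\varepsilon}^{+}$ is finite for all $t\ge 0$; since $X^{\xi}\le u_{\varepsilon}^{+}$, the solution cannot blow up and $T_{\xi}^{X}=\infty$. For (b), if $\int_{r}^{\infty}ds/b(s)<\infty$ then, using $A_{0}(\infty)=\infty$, both $u^{\pm}$ explode by Lemma \ref{lem:n7}(b), and the sandwich (after the $\varepsilon$-limits) gives the closed bounds $A_{0}^{-1}(B_{\xi+M}(\infty))\le T_{\xi}^{X}\le A_{0}^{-1}(B_{\xi+m}(\infty))$.

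The main obstacle is upgrading these closed bounds to the strict, open interval that is claimed. I would obtain strictness from a strict Bihari-type comparison: setting $\psi(t)=\xi+m+\int_{0}^{t}a\,b(X^{\xi})\,ds$, one has $\psi'=a\,b(X^{\xi})\ge a\,b(\psi)$, with strict inequality at any time where $g(t)>m$, provided $b$ increases strictly between $\psi(t)$ and $X_{t}^{\xi}$ there; integrating $\psi'/b(\psi)\ge a$ then gives $B_{\xi+m}(\psi(t))\ge A_{0}(t)$ with a strict gap once such a time has been passed, whence $T_{\xi}^{X}\le T^{\psi}<A_{0}^{-1}(B_{\xi+m}(\infty))$, and symmetrically for the lower endpoint with $M$. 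The delicate point is securing the strict increase of $b$; this is where the hypothesis of (b) is essential, since $\int_{r}^{\infty}ds/b(s)<\infty$ together with $b$ non-decreasing forces $b(s)\to\infty$, so $b$ cannot be eventually constant. The care needed is to locate a time, before the finite explosion horizon, at which $g$ strictly exceeds its infimum (resp. falls below its supremum) and at which $b$ is genuinely increasing, rather than relying only on the global $\inf$ and $\sup$ of $g$.
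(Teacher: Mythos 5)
Your proof follows essentially the same route as the paper: sandwich $X^{\xi}$ between the autonomous comparison solutions started at $\xi+\sup_{s\ge0}g(s)+\varepsilon$ and $\xi+\inf_{s\ge0}g(s)-\varepsilon$ via Lemma \ref{DesLemma}, read off their explosion times from Lemma \ref{lem:n7}, and let $\varepsilon\downarrow 0$. Your closing concern about upgrading the closed bounds to the stated open interval is well founded --- the paper's own proof likewise only delivers the closed interval in the limit (and for constant $g$ the stated open interval is empty) --- so that is a defect of the statement rather than a gap in your argument.
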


\begin{proof}
Let $\varepsilon >0$ be such that $\xi +\inf_{s\geq 0}R(s)>r+\varepsilon $.
Set 
\begin{equation*}
Z_{t}^{\xi }=\xi +\sup_{s\geq 0}g(s)+\varepsilon
+\int_{0}^{t}a(s)b(Z_{s}^{\xi })ds
\end{equation*}%
and 
\begin{equation*}
Y_{t}^{\xi }=\xi +\inf_{s\geq 0}g(s)-\varepsilon
+\int_{0}^{t}a(s)b(Y_{s}^{\xi })ds.
\end{equation*}%
By Lemma \ref{DesLemma} we have, 
\begin{equation*}
Y_{t}^{\xi }<X_{t}^{\xi }<Z_{t}^{\xi },\quad t<T_{\xi +\sup_{s\geq
0}g(s)+\varepsilon }^{Z}.
\end{equation*}%
Letting $\varepsilon \downarrow 0$ the proof is an immediate consequence of Lemma \ref{lem:n7}, and
Hypotheses \textbf{H1} and \textbf{H2}. \hfill 
\end{proof}

\section{Stochastic differential equation with additive Wiener integral noise}

\label{sec:4}

In this section we study equation (\ref{eqdnhomo}) when the noise $g$ is a
Wiener integral. More precisely, here we study the stochastic differential
equation 
\begin{equation}
X_{t}^{\xi }=\xi +\int_{0}^{t}a(s)b(X_{s}^{\xi })ds+I_{t},
\label{sdeintnoise}
\end{equation}%
where $I_{t}=\int_{0}^{t}f(s)dW_{s}$ and $f:[0,\infty )\rightarrow \mathbb{R}
$ is a square-integrable function on $[0,M]$, for any $M>0$.

In the remaining of this section we utilize the following assumption:

\begin{description}
\item[\textbf{H4}:] $\int_{0}^{\infty }f^{2}(s)ds=\infty $ and 
\begin{equation}
\sum_{n=M}^{\infty}\frac{1}{\Upsilon^p (n)}\left( \int_{n}^{n+2
}f^{2}(s)ds\right) ^{p/2}<\infty,  \label{condconvseri}
\end{equation}%
for some $M,p>0$, where 
\begin{equation*}
\Upsilon (t)=\sqrt{2\left( \int_{0}^{t}f^{2}(s)ds\right) \log \log \left(
e^e\vee \int_{0}^{t}f^{2}(s)ds\right) }.
\end{equation*}
\end{description}

\begin{remark}
\label{rem:nic} Observe that (\ref{condconvseri}) holds if, for example, 
\begin{equation*}
t\mapsto \left( \int_{0}^{t+2}f^{2}(s)ds\right) \left(
\int_{0}^{t}f^{2}(s)ds\right) ^{-1}-1.
\end{equation*}%
is a decreasing function in $L^{p}([M,\infty ))$ for some $M,p>0$.
\end{remark}

On the other hand, as a consequence of iterated logarithm theorem for
locally square integrable martingales, we can now state the following:

\begin{lemma}
Under the fact that $\int_0^{\infty}f^2(s)ds=\infty$, we have 
\begin{equation}
\limsup_{t\rightarrow \infty }\frac{I_{t}}{\Upsilon (t)}=1 \quad 
\hbox{\rm
with probability one}.  \label{lit}
\end{equation}
\end{lemma}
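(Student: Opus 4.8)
The plan is to reduce the statement to the classical law of the iterated logarithm for Brownian motion via a time change. Write $V(t)=\int_0^t f^2(s)\,ds$ for the quadratic variation of the continuous (Gaussian) martingale $I$. By the hypothesis $V(\infty)=\infty$, together with the square-integrability of $f$ on compacts, $V$ is continuous, nondecreasing, with $V(0)=0$ and $V(\infty)=\infty$; in particular, by the intermediate value theorem, $V$ maps $[0,\infty)$ onto $[0,\infty)$.

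First I would invoke the Dambis--Dubins--Schwarz time-change theorem to produce a Brownian motion $B=\{B_u:u\ge0\}$, on a possibly enlarged probability space, such that $I_t=B_{V(t)}$ for all $t\ge0$ almost surely. Because $f$ is deterministic, $I$ is in fact a time-changed Brownian motion and this representation is immediate. Substituting into the ratio gives
\[
\frac{I_t}{\Upsilon(t)}=\frac{B_{V(t)}}{\sqrt{2\,V(t)\,\log\log\!\left(e^e\vee V(t)\right)}}.
\]
Since $V(t)\to\infty$, for all large $t$ we have $V(t)>e^e$, so the truncation $e^e\vee V(t)$ can be dropped and the denominator coincides with $\sqrt{2\,V(t)\,\log\log V(t)}$.

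Next I would pass from the $\limsup$ in $t$ to a $\limsup$ in the time-changed variable. Because $V$ is continuous, nondecreasing and surjective onto $[0,\infty)$, for every $T$ one has $\{V(t):t\ge T\}=[V(T),\infty)$, with $V(T)\to\infty$ as $T\to\infty$. Hence the tail behaviour of $t\mapsto B_{V(t)}/\sqrt{2\,V(t)\,\log\log V(t)}$ is exactly that of $u\mapsto B_u/\sqrt{2\,u\,\log\log u}$, so that
\[
\limsup_{t\to\infty}\frac{B_{V(t)}}{\sqrt{2\,V(t)\,\log\log V(t)}}=\limsup_{u\to\infty}\frac{B_u}{\sqrt{2\,u\,\log\log u}}=1\quad\hbox{\rm a.s.},
\]
the last equality being the classical law of the iterated logarithm for Brownian motion. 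This yields (\ref{lit}).

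The main obstacle is the careful handling of the time change when $V$ fails to be strictly increasing, as happens whenever $f$ vanishes on an interval: one must justify both the Brownian representation and the replacement of $\limsup_{t\to\infty}$ by $\limsup_{u\to\infty}$ without invertibility of $V$. This is precisely what the continuity/surjectivity argument settles, since it identifies the tail range $\{V(t):t\ge T\}$ with a half-line and transports the classical LIL along $u\to\infty$ back to $t\to\infty$. Alternatively, one could cite a law of the iterated logarithm for continuous local martingales applied directly to $I$ with $\langle I\rangle_t=\int_0^t f^2(s)\,ds$, obtaining the conclusion in a single step.
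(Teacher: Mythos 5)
Your argument is correct, but it takes a different route from the paper, whose entire proof of this lemma is a one-line citation of Theorem 1.1 in Fu Qing Gao's paper on laws of the iterated logarithm for locally square integrable martingales (the alternative you mention in your closing sentence is essentially what the authors actually do). Your version is self-contained: since $f$ is deterministic, $I$ is a continuous Gaussian martingale with deterministic quadratic variation $V(t)=\int_0^t f^2(s)\,ds$, so the Dambis--Dubins--Schwarz representation $I_t=B_{V(t)}$ is available (indeed the paper itself uses exactly this representation, without proof, for $Y_t=B_{H(t)}$ in the proof of Corollary 2), and you correctly handle the only delicate points --- the possible failure of strict monotonicity of $V$ and the transport of the $\limsup$ through the time change via the identity $\{V(t):t\ge T\}=[V(T),\infty)$ --- before invoking the classical LIL for Brownian motion. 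What your approach buys is elementarity and transparency: only the classical LIL and a deterministic time change are needed, and the role of the hypothesis $V(\infty)=\infty$ is laid bare. What the paper's citation buys is brevity and generality: Gao's theorem covers genuinely random or discontinuous quadratic variations, which is more than is needed here but makes the statement immediate.
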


\begin{proof}
The result is Theorem 1.1 in Qing Gao \cite{G}.$\hfill $
\end{proof}

The following theorem is the main result of this section. 

\begin{theorem}
\label{TRI}Assume that \textbf{H1}, \textbf{H2} and \textbf{H4} are true.
Then the stochastic differential equation (\ref{sdeintnoise}) blows up in
finite time with probability 1 if and only if $\int_{r}^{\infty }\frac{ds}{%
b(s)}<\infty .$
\end{theorem}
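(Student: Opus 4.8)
The plan is to mimic the deterministic argument of Theorem \ref{TheOsnoiGral}, but now controlling the random noise $I_t$ almost surely via the law of the iterated logarithm \eqref{lit}. The key point is that along a random sequence of times the noise is pinned between two deterministic envelopes, after which the comparison Lemma \ref{DesLemma} and the Osgood-type Lemma \ref{lem:n7} can be applied pathwise.

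Let me think carefully about what \eqref{lit} actually gives us and why \textbf{H4} is the right hypothesis.

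First, the statement $\limsup_{t\to\infty} I_t/\Upsilon(t)=1$ almost surely gives two pieces of information. On the one hand, for every $\varepsilon>0$ there is (a.s.) a random time after which $I_t \le (1+\varepsilon)\Upsilon(t)$; on the other hand, there is a random increasing sequence $t_n\uparrow\infty$ along which $I_{t_n}\ge (1-\varepsilon)\Upsilon(t_n)$. The lower bound along $\{t_n\}$ is what will let me run the divergence direction of the argument: on the event that $X^\xi$ does not explode, I want to produce, in analogy with the reciprocal part of Theorem \ref{TheOsnoiGral}, a sequence of times where the solution is large, compare with a deterministic integral equation started high, and extract $\int_r^\infty ds/b(s)=\infty$. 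Here $\Upsilon(t)\to\infty$ (because $\int_0^\infty f^2=\infty$) plays exactly the role that \textbf{H3} played for the general $g$: it forces $\xi + I_{t_n} \to\infty$ along the subsequence, so Hypothesis \textbf{H3} is being replaced by the iterated-logarithm lower bound.

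**The convergent direction and the role of the series condition.**

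For the ``only if'' direction — showing that finite-time blow-up forces $\int_r^\infty ds/b(s)<\infty$ — I would argue contrapositively, exactly as in Theorem \ref{TheOsnoiGral}: assume $\int_r^\infty ds/b(s)=\infty$ and show there is no explosion a.s. Here I use the upper envelope. The second, more delicate use of \textbf{H4}, namely the summability condition \eqref{condconvseri}, is what I expect to be the main obstacle. The bare $\limsup$ statement \eqref{lit} controls $I_t$ only along the continuum but the crude bound $I_t\le(1+\varepsilon)\Upsilon(t)$ must be upgraded to something usable for the comparison, and on short time windows $[n,n+2]$ the increments of $I$ can fluctuate; the series in \eqref{condconvseri} is precisely a Borel--Cantelli input guaranteeing that, almost surely, the local oscillation $\sup_{0\le h\le 2}|I_{n+h}-I_n|$ is eventually dominated by $\Upsilon(n)$ (or is summably small relative to it), so that the pathwise trajectory stays below the deterministic super-solution on each window. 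Concretely I would estimate $P\big(\sup_{0\le h\le 2}|I_{n+h}-I_n|>\Upsilon(n)\big)$ by a Gaussian tail bound in terms of $(\int_n^{n+2}f^2)^{1/2}$, raise it to the $p$-th power to match \eqref{condconvseri}, and invoke Borel--Cantelli.

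**Assembling the two directions.**

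With the envelope controls in hand the rest is deterministic bookkeeping. For the explosion direction (``if''), I assume $\int_r^\infty ds/b(s)<\infty$ and work on the full-measure event where the lower-envelope subsequence exists; starting from a time $t_n$ at which $\xi+I_{t_n}>r+1$ and the trajectory exceeds $r$, I bound $X^\xi$ below by the solution $u$ of the autonomous-on-the-window integral equation $u(t)=(\xi+I_{t_n}-\text{err})+\int_{t_n}^{t} a(s)b(u(s))ds$, apply Lemma \ref{lem:n7}(b) to conclude $u$ blows up in finite time (since $B_{\cdot}(\infty)<\infty$ and \textbf{H1} makes $A(\infty)=\infty$), and transfer the blow-up to $X^\xi$ by Lemma \ref{DesLemma}(a). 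For the non-explosion direction I run the mirror-image argument with the upper envelope and Lemma \ref{DesLemma}(b), deducing via Lemma \ref{lem:n7}(a) that the bounding sub-solution stays finite for all time, whence $\int_r^\infty ds/b(s)=\infty$, which is the contrapositive I wanted. The only genuinely stochastic ingredient is the passage from \eqref{lit} and \eqref{condconvseri} to the two deterministic envelope inequalities; everything downstream is a pathwise replay of Theorem \ref{TheOsnoiGral} with $I_t$ in place of the general noise $g$ and $\Upsilon$ playing the part of \textbf{H3}.
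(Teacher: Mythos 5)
Your central idea coincides with the paper's: reduce the theorem to a pathwise application of Theorem \ref{TheOsnoiGral} by showing that the noise $I$ satisfies the role of $g$, using the law of the iterated logarithm (\ref{lit}) for a lower bound at isolated times $t_n$ and the summability condition (\ref{condconvseri}) --- via a Burkholder--Davis--Gundy $p$-th moment bound and a Borel--Cantelli/convergent-series argument --- to control the oscillation of $I$ on the windows $[n,n+2]$, so that $\inf_{s\in[t_n,t_n+1]}I_s\ge \tfrac12\Upsilon(t_n)-\tfrac14\Upsilon([t_n])\to\infty$, i.e.\ the paths of $I$ satisfy \textbf{H3} almost surely. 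However, you have attached the hypotheses to the wrong directions of the equivalence. The oscillation control and the LIL lower bound are needed precisely for the explosion direction ($\int_r^\infty ds/b<\infty$ implies blow-up), which in Theorem \ref{TheOsnoiGral} is the ``reciprocal'' half driven by \textbf{H3}. The other direction (blow-up implies $\int_r^\infty ds/b<\infty$) needs no envelope at all: if the path explodes at a finite time $T(\omega)$, continuity of $I$ bounds the noise on $[0,T(\omega)]$ by a constant and the first half of the proof of Theorem \ref{TheOsnoiGral} applies verbatim. Your proposed use of the asymptotic upper envelope $I_t\le(1+\varepsilon)\Upsilon(t)$ there does not even fit Lemma \ref{DesLemma}(b), whose hypothesis requires a constant dominating initial value, since $\Upsilon$ is unbounded.

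There is also a quantitative gap in your explosion direction. You compare $X^{\xi}$ from time $t_n$ with the sub-solution $u(t)=(\xi+I_{t_n}-\mathrm{err})+\int_{t_n}^{t}a(s)b(u(s))\,ds$ and transfer its blow-up to $X^{\xi}$ by Lemma \ref{DesLemma}(a); but that comparison is valid only while $I_t\ge I_{t_n}-\mathrm{err}$, and your oscillation control covers only a window of length $2$, whereas Lemma \ref{lem:n7}(b) places the blow-up of $u$ at $A_{t_n}^{-1}(B_{\xi+I_{t_n}-\mathrm{err}}(\infty))$, which a priori may lie outside that window. To close this you must add that, since $\xi+I_{t_n}-\mathrm{err}\to\infty$ and $\int_r^\infty ds/b<\infty$, the tail $B_{\xi+I_{t_n}-\mathrm{err}}(\infty)$ is eventually smaller than $\int_{t_n}^{t_n+\tilde\eta}a$, which stays bounded away from $0$ by \textbf{H1}, so the blow-up of $u$ occurs inside the controlled window for $n$ large. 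The paper sidesteps this entirely by arguing in the contrapositive form already proved in Theorem \ref{TheOsnoiGral}: if no explosion occurs, the comparison on the fixed window $[t_n,t_n+\tilde\eta]$ gives $\int_{\xi+\inf_{0\le h\le\tilde\eta}I_{t_n+h}-1}^{\infty}ds/b(s)>\int_{t_n}^{t_n+\tilde\eta}a(s)\,ds\ge c>0$ for every $n$, which is incompatible with $\int_r^\infty ds/b<\infty$ because the lower limit of integration tends to $\infty$. Once \textbf{H3} is verified for almost every path of $I$, nothing downstream needs to be re-proved.
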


\begin{proof}
We first observe that, by Theorem \ref{TheOsnoiGral}, we only need to show
that the paths of $I$ satisfy Hypothesis \textbf{H3} almost surely.

Burkholder-Davis-Gundy inequality (see, for instance, Theorem 3.5.1 in \cite%
{Du}) yields%
\begin{equation*}
E\left[ \left( \sup_{s,t\in \lbrack {n},{n}+2 ]}|I_{t}-I_{s}|\right) ^{p}%
\right] \leq c_{p}\left( \int_{{n}}^{{n}+2}f^{2}(s)ds\right) ^{p/2},
\end{equation*}%
where $c_{p}$ is a constant depending only on $p$. Then, by (\ref%
{condconvseri}),

\begin{equation*}
E\left[ \sum_{n=M}^{\infty }\left( \sup_{s,t\in \lbrack {n},n+2]}\frac{%
|I_{t}-I_{s}|}{\Upsilon ({n})}\right) ^{p}\right] \leq
c_{p}\sum_{n=M}^{\infty }\frac{1}{\Upsilon ^{p}({n})}\left( \int_{{n}}^{{n}%
+2}f^{2}(s)ds\right) ^{p/2}<\infty .
\end{equation*}%
Therefore, it is enough to prove that $I(\omega )$ satisfies \textbf{H3} for 
$\omega \in \Omega $ for which there exists $n_{0}\in \mathbb{N}$ such that%
\begin{equation*}
\sup_{s,t\in \lbrack {n},{n}+2]}\frac{|I_{t}(\omega )-I_{s}(\omega )|}{%
\Upsilon ({n})}\leq \frac{1}{4},\quad \hbox{\rm for}\ n\geq n_{0}
\end{equation*}%
and (\ref{lit}) is satisfied. Hence, we can find a sequence $\{t_{n}:n\in {%
\mathbb{N}}\}$ such that $t_{n}>n$ and%
\begin{equation*}
\frac{I_{t_{n}}(\omega )}{\Upsilon (t_{n})}\geq \frac{1}{2}\quad 
\hbox{\rm
for all}\ n\in {\mathbb{N}}.
\end{equation*}%
Finally, using the properties established in this proof, we are able to
write, for $n\geq n_{0}$, 
\begin{eqnarray*}
\inf_{s\in \lbrack t_{n},t_{n}+1]}I_{s}(\omega ) &=&I_{t_{n}}(\omega
)+\inf_{s\in \lbrack t_{n},t_{n}+1]}\left( I_{s}(\omega )-I_{t_{n}}(\omega
)\right) \\
&\geq &I_{t_{n}}(\omega )+\inf_{s\in \lbrack t_{n},t_{n}+1]}(-|I_{s}(\omega
)-I_{t_{n}}(\omega )|) \\
&\geq &I_{t_{n}}(\omega )-\left( \sup_{s,t\in \lbrack \lbrack
t_{n}],[t_{n}]+2]}\frac{|I_{s}(\omega )-I_{t}(\omega )|}{\Upsilon ([t_{n}])}%
\right) \Upsilon ([t_{n}]) \\
&\geq &\frac{1}{2}\Upsilon (t_{n})-\frac{1}{4}\Upsilon ([t_{n}])\geq \frac{1%
}{4}\Upsilon (t_{n})\rightarrow \infty ,
\end{eqnarray*}%
as $n\rightarrow \infty $, where $[t]$ is the integer part of $t$ and, in
the last inequality, we have used that $\Upsilon $ is a non-decreasing
function.$\hfill $
\end{proof}

\bigskip

Now, in order to state a consequence of Theorem \ref{TRI}, we consider the
equation 
\begin{equation}  \label{eq:nagre}
Y_t=\xi+\int_0^t{\tilde b}(s,Y_s)ds+I_t,\quad t\ge 0.
\end{equation}
Here, for each $T>0$, the function ${\tilde b}:[0,\infty))\times{\mathbb{R}}%
\rightarrow [0,\infty)$ is locally Lipschitz (uniformly on $s\in[0,T]$), $%
b(\cdot,x)$ is continuous, for $x\in{\mathbb{R}}$, and $I$ satisfy
Hypothesis \textbf{H4} with $f$ continuous. Remember that, in this case,
equation (\ref{eq:nagre}) has a unique solution that may explode in finite
time.

\begin{corollary}
Let $a$ and $b$ satisfy Conditions \textbf{H1} and \textbf{H2},
respectively. Assume that $\xi\in{\mathbb{R}}$, $b$ is locally Lipschitz, $%
\int_r^\infty (1/b(x))dx<\infty$ (resp. $\int_r^\infty (1/b(x))dx=\infty$)
and $a(s)b(x)\le {\tilde b}(s,x)$ (resp. ${\tilde b}(s,x)\le a(s)b(x)$), $%
(s,x)\in[0,\infty)\times{\mathbb{R}}$. Then, the solution to equation (\ref%
{eq:nagre}) explodes (resp. does not explode) in finite time.
\end{corollary}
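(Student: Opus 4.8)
The plan is to derive the corollary from Theorem \ref{TRI} by a pathwise comparison between the solution $Y$ of (\ref{eq:nagre}) and the solution $X^{\xi}$ of (\ref{sdeintnoise}), both started at $\xi$ and driven by the \emph{same} additive noise $I$. Since $a$ and $b$ satisfy \textbf{H1} and \textbf{H2}, $b$ is locally Lipschitz, and $I$ satisfies \textbf{H4}, Theorem \ref{TRI} applies to $X^{\xi}$: if $\int_{r}^{\infty}(1/b)<\infty$ then $X^{\xi}$ explodes in finite time almost surely, while if $\int_{r}^{\infty}(1/b)=\infty$ it does not. The latter direction should be read off the proof of Theorem \ref{TRI}, which for almost every $\omega$ reduces equation (\ref{sdeintnoise}) to equation (\ref{eqdnhomo}) with continuous driver $g=I_{\cdot}(\omega)$ satisfying \textbf{H3}, so that the deterministic equivalence of Theorem \ref{TheOsnoiGral} applies path by path.

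The heart of the argument is the comparison. Fix $\omega$ and write $g(t)=I_{t}(\omega)$, a continuous function. Subtracting the common noise, the processes $\bar{X}_{t}=X^{\xi}_{t}-I_{t}$ and $\bar{Y}_{t}=Y_{t}-I_{t}$ are continuously differentiable and solve the random ordinary differential equations $\bar{X}'_{t}=a(t)b(\bar{X}_{t}+g(t))$ and $\bar{Y}'_{t}=\tilde{b}(t,\bar{Y}_{t}+g(t))$ with $\bar{X}_{0}=\bar{Y}_{0}=\xi$; their right-hand sides are continuous in $t$ and, because $b$ and $\tilde{b}$ are locally Lipschitz and $a$ is continuous, locally Lipschitz in the state variable. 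When $a(s)b(x)\le \tilde{b}(s,x)$, the process $\bar{Y}$ is a supersolution of the $\bar{X}$-equation, so a comparison theorem for ordinary differential equations (in the spirit of Lemma \ref{DesLemma}) yields $\bar{Y}_{t}\ge \bar{X}_{t}$, i.e. $Y_{t}\ge X^{\xi}_{t}$; when $\tilde{b}(s,x)\le a(s)b(x)$, the process $\bar{Y}$ is a subsolution and the reverse inequality $Y_{t}\le X^{\xi}_{t}$ holds.

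Given the comparison, the two conclusions follow at once. The drifts in (\ref{sdeintnoise}) and (\ref{eq:nagre}) are nonnegative, hence $X^{\xi}_{t}\ge \xi+I_{t}$ and $Y_{t}\ge \xi+I_{t}$, so neither process can explode downward and every explosion is to $+\infty$. In the first regime $X^{\xi}$ reaches $+\infty$ at a finite time $T^{X}_{\xi}$ almost surely, and $Y_{t}\ge X^{\xi}_{t}$ forces $Y_{t}\to+\infty$ no later than $T^{X}_{\xi}$, so $Y$ explodes in finite time. In the second regime $X^{\xi}$ remains finite for all $t$ almost surely, and the sandwich $\xi+I_{t}\le Y_{t}\le X^{\xi}_{t}$ keeps $Y$ finite for every $t$, so $Y$ does not explode.

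The step I expect to be delicate is carrying the pathwise comparison all the way up to the explosion time rather than over a fixed interval. A clean way is to apply the comparison on each interval $[0,\theta_{k}]$, where $\theta_{k}=\inf\{t\ge 0:|X^{\xi}_{t}|\vee|Y_{t}|\ge k\}$, obtain the ordering there, and then let $k\to\infty$ so that $\theta_{k}$ increases to the explosion time, the inequality passing to the limit by continuity. Here the global local Lipschitz hypothesis on $b$ is what makes the noise-subtracted equations well posed and the comparison valid on each such interval, even on the (random) time set where $\xi+I_{t}$ drops below $l$, where \textbf{H2} alone would not guarantee enough regularity of $b$.
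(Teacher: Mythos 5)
Your argument is correct and reaches the same conclusion, but the key comparison step is established differently from the paper. The paper's proof is very short: it treats only the non-explosion direction (declaring the other ``similar''), obtains $Y_{t}\leq X_{t}^{\xi}$ in one line by citing Milian's stochastic comparison theorem (Theorem 2 of \cite{Mi}), and then concludes via Theorem \ref{TRI} together with the observation that $Y$ cannot escape to $-\infty$ because ${\tilde b}\geq 0$ and $I$ has continuous paths --- exactly the lower bound $Y_{t}\geq\xi+I_{t}$ you use. Your route replaces the citation of Milian by a self-contained pathwise argument: subtracting the common additive noise turns both equations into random ODEs with ordered, locally Lipschitz right-hand sides, and a deterministic comparison theorem plus localization up to the explosion time gives the ordering. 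This is more elementary and makes transparent exactly where the extra hypothesis ``$b$ locally Lipschitz on all of ${\mathbb{R}}$'' is used, and you are also more careful than the paper in spelling out the explosion direction and the passage to the explosion time. One small caveat: Lemma \ref{DesLemma} as stated does not literally cover your situation (it requires strict integral inequalities, $v>r$, and monotonicity of $b$ rather than Lipschitz continuity), so your phrase ``in the spirit of Lemma \ref{DesLemma}'' should really be backed by the standard Lipschitz-based ODE comparison theorem (e.g.\ via an $\varepsilon$-perturbation of the larger right-hand side and initial datum, then $\varepsilon\downarrow 0$); with that made explicit, the proof is complete.
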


\begin{proof}
We only consider the case that $\int_r^\infty (1/b(x))dx=\infty$ and ${%
\tilde b}(s,x)\le a(s)b(x)$, since the proof is similar for the other one.

Let $X^{\xi }$ and $Y$ be the solutions of equations (\ref{sdeintnoise}) and
(\ref{eq:nagre}), respectively. Then, from Milian \cite{Mi} (Theorem 2), we
get 
\begin{equation*}
Y_{t}\leq X_{t}^{\xi },\quad t\geq 0.
\end{equation*}%
Thus, by Theorem \ref{TRI}, the solution $Y$ of equation (\ref{eq:nagre})
cannot explode in finite time because it cannot go to $-\infty $ in finite
time since ${\tilde{b}}$ is ${\mathbb{R}}_{+}$-valued and $I$ has continuous
paths and, consequently, bounded paths on compact intervals of $[0,\infty )$%
. Therefore the proof is complete.$\hfill $
\end{proof}

\begin{example}
Take 
\begin{eqnarray*}
a(x) &=&x^{\alpha },\ \ x\in (0,\infty ), \\
b(x) &=&8x^{2}-36x+48,\ \ x\in \mathbb{R}, \\
f(x) &=&x^{\beta },\ \ x\in (0,\infty ),\ \ \beta >-\frac{1}{2}.
\end{eqnarray*}%
Hence%
\begin{equation*}
\lim_{t\rightarrow \infty }\int_{t}^{t+1}x^{\alpha }dx=\left\{ 
\begin{array}{cc}
+\infty , & \alpha >0, \\ 
1, & \alpha =0, \\ 
0, & \alpha <0,%
\end{array}%
\right. 
\end{equation*}%
and 
\begin{equation*}
\frac{(t+2)^{2\beta +1}}{t^{2\beta +1}}-1=\left( 1+\frac{2}{t}\right)
^{2\beta +1}-1\leq C\frac{1}{t}.
\end{equation*}%
The last function belongs to $L^{p}([1,\infty ])$, for any $p>1$. Thus $f$
satisfied (\ref{condconvseri}) due to Remark \ref{rem:nic}.

On the other hand, it is clear that $\int_{\xi }^{\infty }\frac{dx}{%
8x^{2}-36x+48}<\infty $, $\xi >0$. Then%
\begin{equation*}
X_{t}^{\xi }=\xi +\int_{0}^{t}s^{\alpha }(8(X_{s}^{\xi })^{2}-36(X_{s}^{\xi
})+48)ds+\int_{0}^{t}s^{\beta }dW_{s},
\end{equation*}%
explodes in finite time when $\alpha \geq 0$. Notice that $b$ is not
necessarily increasing as in \cite{L-V} or \cite{M-J-J}. Moreover, we can
improve Theorem \ref{TRI} in some particular cases, see \cite{V}.
\end{example}

\begin{example}
\label{ExEXSC} The function $Y_{t}\equiv 1$ is solution to%
\begin{equation*}
Y_{t}=1+\int_{0}^{t}(Y_{s})^{2}ds-t,\ \ t\geq 0.
\end{equation*}%
Although $\int_{1}^{\infty }(1/s^{2})ds<\infty $, $Y$ does not blow-up in
finite time because $g(t)=-t,\ t\geq 0,$ does not satisfies Hypothesis 
\textbf{H3}.
\end{example}

Also notice that $f(t)=\exp (\exp (t))$, $t\geq 0,$ does not satisfies (\ref%
{condconvseri}). We intuitively understand that in this case the noise is to
strong and we have also blow up in finite time, for any initial condition.
We have a contrary effect as in Example \ref{ExEXSC}.

\begin{proposition}
Let $f$ and $I$ be defined in equation (\ref{sdeintnoise}). Suppose \textbf{%
H1}, \textbf{H2} and $\int_{0}^{\infty }f^{2}(s)ds<\infty $ are satisfied.
Then $I$ is bounded with probability one and, under the assumption $\xi
+\inf_{s\geq 0}I_{s}> r$, the stochastic differential equation (\ref%
{sdeintnoise}) blows up in finite time if and only if $B_{r}(\infty )
<\infty $.
\end{proposition}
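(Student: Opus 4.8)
The plan is to reduce this stochastic assertion to the purely deterministic Proposition \ref{BNoise} by working path by path, exactly as in the proof of Theorem \ref{TRI}; the difference is that there one verifies the paths of $I$ satisfy Hypothesis \textbf{H3}, whereas here $\int_0^\infty f^2(s)\,ds<\infty$ forces the paths to be \emph{bounded}, which is precisely the hypothesis of Proposition \ref{BNoise}. Thus the argument splits into two steps: first I would show that $I$ is bounded almost surely, and then I would invoke Proposition \ref{BNoise} on each fixed path, after noting that $B_r(\infty)=\int_r^\infty (1/b(s))\,ds$, so that the condition $B_r(\infty)<\infty$ is the Osgood integral appearing there.

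For the boundedness, recall that $I_t=\int_0^t f(s)\,dW_s$ is a continuous local martingale with quadratic variation $\langle I\rangle_t=\int_0^t f^2(s)\,ds$, so by the It\^o isometry $E[I_t^2]=\int_0^t f^2(s)\,ds\le \int_0^\infty f^2(s)\,ds<\infty$. Hence $\{I_t\}$ is bounded in $L^2$ and, by Doob's martingale convergence theorem, converges almost surely to a finite limit as $t\to\infty$; being continuous on $[0,\infty)$ and convergent at infinity, each such path is bounded on $[0,\infty)$. Equivalently, using the time-change representation $I_t=B_{\langle I\rangle_t}$ for a Brownian motion $B$ (as in the proof of Corollary \ref{cor:dis}, see \cite{Du}), boundedness is immediate from the continuity of $B$ together with $\langle I\rangle_\infty<\infty$. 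In particular the global infimum $\inf_{s\ge 0}I_s$ is finite almost surely, so the assumption $\xi+\inf_{s\ge 0}I_s>r$ can be imposed on the same full-measure set.

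With this in hand, fix $\omega$ in the almost sure event on which $I(\omega)$ is bounded and $\xi+\inf_{s\ge 0}I_s(\omega)>r$. Because the noise enters (\ref{sdeintnoise}) additively, the trajectory $t\mapsto X_t^\xi(\omega)$ solves the deterministic integral equation (\ref{eqdnhomo}) with $g=I(\omega)$, a bounded continuous function satisfying $\xi+\inf_{s\ge 0}g(s)>r$. Proposition \ref{BNoise}(a) then gives no explosion in finite time when $B_r(\infty)=\infty$, and Proposition \ref{BNoise}(b) gives blow-up in finite time when $B_r(\infty)<\infty$ (indeed with explosion time confined to the deterministic interval stated there). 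Since this dichotomy holds for almost every $\omega$, equation (\ref{sdeintnoise}) blows up in finite time with probability one if and only if $B_r(\infty)<\infty$.

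The substantive step is the almost sure boundedness of $I$; the rest is a pathwise application of Proposition \ref{BNoise}. The one point demanding care is that the hypothesis $\xi+\inf_{s\ge 0}I_s>r$ concerns the random global infimum of $I$, and the finiteness of $\langle I\rangle_\infty$ is exactly what prevents this infimum from being $-\infty$, allowing the deterministic proposition to be applied on a single full-measure event.
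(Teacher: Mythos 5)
Your proposal is correct and follows essentially the same route as the paper: the authors likewise deduce almost sure boundedness of $I$ from $\int_0^\infty f^2(s)\,ds<\infty$ via the martingale convergence/time-change results in Durrett (Lemma 3.4.7 and Theorem 3.4.9) and then conclude by a pathwise application of Proposition \ref{BNoise}. Your write-up simply spells out the details that the paper leaves to the citations.
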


\noindent\textbf{Remark} Observe that $\xi +\inf_{s\geq 0}I_{s}$ depends on $%
\omega$.

\begin{proof}
The result follows from \cite{Du} (Lemma 3.4.7 and Theorem 3.4.9), and
Proposition \ref{BNoise}.$\hfill $
\end{proof}

\section{An approach to obtain the distribution of the explosion time of a
stochastic differential equation}

\label{sec:5}

Now we study some stochastic differential equations of the form 
\begin{equation}
X_{t}^{\xi }=\xi +\int_{0}^{t}b(s,X_{s}^{\xi })ds+\int_{0}^{t}\sigma
(s,X_{s}^{\xi })dW_{s},\quad t\ge 0.  \label{edegral}
\end{equation}%
Namely, we propose a method to figure out the distribution of the explosion
time $\tau _{\xi }$ of $X^{\xi}$. Intuitively, $\tau_{\xi}$ is a stopping
time such that (\ref{edegral}) has a solution up to this stopping time and $%
\limsup_{t\uparrow \tau_{\xi}}|X_t|=\infty.$

\subsection{Autonomous case}

\label{subsec:5.1} This section is devoted to deal with the stochastic
differential equation 
\begin{equation*}
X_{t}^{\xi }=\xi +\int_{0}^{t}b(X_{s}^{\xi })ds+\int_{0}^{t}\sigma
(X_{s}^{\xi })dW_{s},\quad t\geq 0,
\end{equation*}%
with $b,\sigma \in C^{1}({\mathbb{R}})$. In this case, McKean \cite{MK} has
shown that $X_{(\tau _{\xi })-}^{\xi }\in \{-\infty ,\infty \}$ on $[\tau
_{\xi }<\infty ]$. So, henceforth, we can utilize the convention 
\begin{equation*}
\tau _{\xi }^{+}=\inf \{t>0:X_{t}^{\xi }=\infty \}\quad \hbox{\rm and}\quad
\tau _{\xi }^{-}=\inf \{t>0:X_{t}^{\xi }=-\infty \}.
\end{equation*}

\begin{theorem}
\label{thm:distet} Consider a bounded function $u:[0,\infty )\times \mathbb{R%
}\rightarrow \mathbb{R}$ that satisfies the following boundary value problem:%
\begin{eqnarray}
\frac{\partial u}{\partial t}(t,x) &=&\frac{1}{2}\sigma ^{2}(x)\frac{%
\partial ^{2}u}{\partial x^{2}}(t,x)+b(x)\frac{\partial u}{\partial x}(t,x),
\quad t> 0\ \hbox{\rm and}\ x\in{\mathbb{R}},  \label{eccalor} \\
u(0,x) &=&0,\quad \hbox{\rm for all}\ x\in \mathbb{R}.  \label{conespac}
\end{eqnarray}

\begin{itemize}
\item[a)] Assume that $u(t,\infty)=u(t,-\infty)=1$. Then $P(\tau_{\xi}\le
t)=u(t,\xi).$

\item[b)] $u(t,\infty)=1$ and $u(t,-\infty)=0$ implies that $%
P(\tau_{\xi}^+\le t)=u(t,\xi).$

\item[c)] If $u(t,\infty)=0$ and $u(t,-\infty)=1$, we have $%
P(\tau_{\xi}^-\le t)=u(t,\xi).$
\end{itemize}
\end{theorem}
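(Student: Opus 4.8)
The plan is to read (\ref{eccalor}) as saying that the operator on its right-hand side is precisely the generator of the diffusion $X^{\xi}$, and to build the corresponding Feynman--Kac martingale. Fix $t>0$ and, for $0\le s<t\wedge\tau_{\xi}$, set $M_s=u(t-s,X_s^{\xi})$. First I would apply It\^o's formula to the function $(s,x)\mapsto u(t-s,x)$: the time derivative contributes $-\partial_t u(t-s,X_s)$, while the first- and second-order terms contribute $b(X_s)\partial_x u+\tfrac12\sigma^2(X_s)\partial_{xx}u$, all evaluated at $(t-s,X_s)$. Because $u$ solves the PDE (\ref{eccalor}), the whole drift cancels, leaving $M_s=u(t,\xi)+\int_0^s\sigma(X_r)\partial_x u(t-r,X_r)\,dW_r$. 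Thus $M$ is a local martingale on $[0,t\wedge\tau_{\xi})$, and since $u$ is bounded, $M$ is a bounded process.

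Next I would localize with $\tau_{\xi}^{k}=\inf\{s:|X_s^{\xi}|\ge k\}$. On $[0,t\wedge\tau_{\xi}^{k}]$ the integrand $\sigma\,\partial_x u$ is bounded and the horizon is bounded, so $M_{\cdot\wedge\tau_{\xi}^{k}}$ is a genuine bounded martingale and $u(t,\xi)=E\!\left[M_{t\wedge\tau_{\xi}^{k}}\right]$. Letting $k\to\infty$, I would pass to the limit by bounded convergence, which reduces everything to identifying the almost-sure limit of $M_s$ as $s\uparrow t\wedge\tau_{\xi}$. On $\{\tau_{\xi}>t\}$ the path is continuous up to $t$, so $M_s\to u(0,X_t^{\xi})=0$ by the initial condition (\ref{conespac}). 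On $\{\tau_{\xi}<t\}$ the remaining time $t-s\to t-\tau_{\xi}>0$ stays bounded away from zero while $X_s^{\xi}\to X_{(\tau_{\xi})-}^{\xi}\in\{-\infty,+\infty\}$ by McKean \cite{MK}, so $M_s\to u(t-\tau_{\xi},X_{(\tau_{\xi})-}^{\xi})$, a value controlled entirely by the behaviour of $u$ at $\pm\infty$.

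This is where the three cases separate. In case (a), $u(\cdot,\pm\infty)=1$ forces the limit to be $1$ on all of $\{\tau_{\xi}<t\}$, so the terminal value is $\mathbf 1_{\{\tau_{\xi}\le t\}}$ and $u(t,\xi)=P(\tau_{\xi}\le t)$. In case (b) the limit equals $1$ exactly on the event of explosion to $+\infty$, i.e.\ on $\{\tau_{\xi}^{+}\le t\}$, and $0$ on explosion to $-\infty$, giving $u(t,\xi)=P(\tau_{\xi}^{+}\le t)$; case (c) is the mirror image with the roles of the two endpoints exchanged. In every case the conclusion follows by combining the martingale identity $E[M_0]=E[M_{(t\wedge\tau_{\xi})-}]$ with $M_0=u(t,\xi)$.

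The main obstacle is precisely this limiting step. Boundedness of $u$ supplies the uniform integrability needed both for the passage $k\to\infty$ and for the final bounded convergence, so the analytic difficulty sits entirely in the identification of $M_{(t\wedge\tau_{\xi})-}$. The genuinely delicate event is $\{\tau_{\xi}=t\}$, on which $t-s\to0^{+}$ and $X_s^{\xi}\to\pm\infty$ simultaneously, so that $M_s$ approaches the corner $(0,\pm\infty)$ where the initial condition (\ref{conespac}) and the boundary prescriptions clash and the double limit is ambiguous. I would dispatch this by noting that, for a fixed $t$, the explosion time carries no atom at $t$, so $\{\tau_{\xi}=t\}$ is $P$-null and the clean identification on $\{\tau_{\xi}<t\}\cup\{\tau_{\xi}>t\}$ is all that is required.
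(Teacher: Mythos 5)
Your argument is essentially the paper's own proof: apply It\^o's formula to $u(t-s,X_s^{\xi})$, use the PDE to cancel the drift, localize at $\tau_{\xi}^{m}=\inf\{t>0:|X_t^{\xi}|>m\}$, invoke the boundedness of $u$ to get a true martingale and to justify the passage to the limit, and then read off the three cases from the initial condition and the boundary values at $\pm\infty$. The only real difference is that you explicitly flag the corner event $\{\tau_{\xi}=t\}$, which the paper passes over in silence; your resolution is reasonable (at worst the atoms form a countable set, and one then extends by right-continuity of both sides), and your stochastic integrand $\sigma(X_r)\partial_x u(t-r,X_r)$ is in fact the correct one where the paper drops the factor $\sigma$.
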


\noindent\textbf{Remarks}\textit{
\begin{itemize}
\item[1)] Maximum principle  provides
with conditions on $b$ and $\sigma$ that guarantee  the 
solution of equation (\ref{eccalor}) is bounded (see Friedman \cite{F}).
\item[2)]
It is quite interesting to observe that (\ref{eccalor}) is related to
transition density of process $X^{\xi}$, or related to the fundamental
solution of the associated Cauchy problem (see \cite{F}). On the other hand,
(\ref{conespac}) and the conditions in Statement a)-c)
are intuitively clear. In fact, (\ref%
{conespac}) establishes that if we begin at a real point ($\xi\in \mathbb{R}$),
then we need some time to get blow-up. And other conditions  mean
 that if we
begin at cementery state ($\pm \infty $), then the time to blow-up is less
than any time.
\item[3)] Observe that $P(\tau^{\xi}\le t)=P(\tau^{\xi}_+\le 
t)+P(\tau^{\xi}_-\le t)$ and that, for example in Statement a), we
have $P(\tau^{\xi}<\infty)=u(\infty,\xi).$
\item[4)] If $X^{\xi}$ does not explodes in finite time, then equation
(\ref{eccalor})-(\ref{conespac}) has not a bounded solution
satisfying  conditions established in either Statement a), b), or c).
\end{itemize}}

\begin{proof}
Using It\^{o}'s formula on $0\leq s<t$ and that $u$ is solution to (\ref%
{eccalor})\ we obtain%
\begin{equation*}
u(t-(s\wedge \tau _{\xi }^{m}),X_{s\wedge \tau _{\xi }^{m}}^{\xi })=u(t,\xi
)+\int_{0}^{s\wedge \tau _{\xi }^{m}}\frac{\partial u}{\partial x}%
(t-r,X_{r}^{\xi })dW_{r},
\end{equation*}%
where $\tau _{\xi }^{m}=\inf \{t>0:|X_{t}^{\xi }|>m\}$. Since $u$ is
bounded, then the above stochastic integral is a martingale. Therefore 
\begin{equation*}
u(t,\xi )=E\left[ u(t-(s\wedge \tau _{\xi }^{m}),X_{s\wedge \tau _{\xi
}^{m}}^{\xi })\right] .
\end{equation*}%
Letting $s\uparrow t$, then continuity of $X^{\xi }$ and the boundedness of $%
u$, together with the dominated convergence theorem, allow us to write 
\begin{eqnarray*}
u(t,\xi ) &=&E\left[ u(t-(t\wedge \tau _{\xi }^{m}),X_{t\wedge \tau _{\xi
}^{m}}^{\xi })\right] \\
&=&E\left[ u(t-(t\wedge \tau _{\xi }^{m}),X_{t\wedge \tau _{\xi }^{m}}^{\xi
}),\tau _{\xi }\leq t\right] \\
&&+E\left[ u(t-(t\wedge \tau _{m}^{\xi }),X_{t\wedge \tau _{m}^{\xi }}^{\xi
}),\tau _{\xi }>t\right] .
\end{eqnarray*}%
Taking $m\rightarrow \infty ,$%
\begin{equation}
u(t,\xi )=E\left[ u(t-\tau _{\xi },X_{\tau _{\xi }}^{\xi }),\tau _{\xi }\leq
t\right] +E\left[ u(0,X_{t}^{\xi }),\tau _{\xi }>t\right]  \label{distrexp}
\end{equation}%
Now we consider Statement a),
\begin{equation*}
u(t,\xi )=E\left[ u(t-\tau _{\xi },X_{\tau _{\xi }}^{\xi }),\tau _{\xi }\leq
t\right] =P(\tau _{\xi }\leq t).
\end{equation*}%
Statement b) is proven as follows. From equality (\ref{distrexp}) we get 
\begin{eqnarray*}
u(t,\xi ) &=&E\left[ u(t-\tau _{\xi },X_{\tau _{\xi }}^{\xi }),\tau _{\xi
}\leq t,\tau _{\xi }^{+}<\tau _{\xi }^{-}\right] \\
&&+E\left[ u(t-\tau _{\xi },X_{\tau _{\xi }}^{\xi }),\tau _{\xi }\leq t,\tau
_{\xi }^{-}<\tau _{\xi }^{+}\right] \\
&=&P(\tau _{\xi }^{+}\leq t).
\end{eqnarray*}

Finally, Statement c) is proven similarly. So the proof is complete. $\hfill 
$
\end{proof}

\begin{examples}
\begin{itemize}
\item[a)] In Example \ref{ExamOSG1}, with $f\equiv 1$, we have%
\begin{equation*}
\bar{u}(t,\xi )=\left\{ 
\begin{array}{ll}
\Phi \left( \frac{|\xi |^{1-\alpha }}{(\alpha -1)\sqrt{t}}\right) , & \xi >0,
\\ 
0, & \xi \leq 0,%
\end{array}%
\right.
\end{equation*}%
and 
\begin{equation*}
\text{\b{u}}(t,\xi )=\left\{ 
\begin{array}{ll}
0, & \xi >0, \\ 
\Phi \left( \frac{|\xi |^{1-\alpha }}{(\alpha -1)\sqrt{t}}\right) , & \xi
\leq 0,%
\end{array}%
\right.
\end{equation*}%
satisfy Statements b) and c) of Theorem \ref{thm:distet}, respectively. In
particular, if $\xi >0$, then $\bar{u}(\infty ,\xi )=1$, therefore we have a
positive blow-up. This phenomenon is explained by Milan \cite{Mi} (Theorem
1) due to the involved solution being a nonnegative process when $\xi >0$.

\item[b)] For $\beta >0$, the partial differential equation%
\begin{eqnarray*}
\frac{\partial u}{\partial t}(t,x) &=&\frac{a^{2}}{2}e^{2\beta x}\frac{%
\partial ^{2}u}{\partial x^{2}}(t,x)+\beta a^{2}e^{2\beta x}\frac{\partial u%
}{\partial x}(t,x), \\
u(0,x) &=&0,\ \ \forall x\in \mathbb{R},
\end{eqnarray*}%
has solution,%
\begin{equation*}
u(t,x)=\exp \left( -\frac{e^{-2\beta x}}{2(a\beta )^{2}t}\right) .
\end{equation*}%
Since $u(t,\infty )=1$ and $u(t,-\infty )=0$, then the distribution of
explosion time to the stochastic differential equation%
\begin{equation*}
X_{t}^{x}=x+\int_{0}^{t}\beta a^{2}e^{2\beta
X_{s}^{x}}ds+\int_{0}^{t}ae^{\beta X_{s}^{x}}dW_{s},
\end{equation*}%
is given by 
\begin{equation*}
P(\tau _{\xi }\leq t)=\exp \left( -\frac{e^{-2\beta x}}{2(a\beta )^{2}t}%
\right)
\end{equation*}%
because of $P(\tau _{\xi }^{+}<\infty )=1$.
\end{itemize}
\end{examples}

\bigskip

\noindent\textbf{Remark}\textit{\ It is not difficult to see that Examples %
\ref{ExamOSG1} and \ref{ExamOSG2} are solution of the corresponding partial
differential equations (PDEs), then we conjecture that the distribution of
the explosion time is the solution of such a PDE. If this is true, then we
have the following criterion of explosion: There is explosion in finite time
if and only if the corresponding PDE has a bounded solution. Moreover, this
criterion could be applied in more dimensions and for non autonomous
processes (see \cite{F}).}

\subsection{Laplace transform of the distribution of the explosion time}

\label{sec:5.2} Finally, in this subsection we indicate how we could calculate
the Laplace transformation of the distribution of the explosion time $%
\tau_{\xi}$ of the solution to equation (\ref{eq:nagre}). It means, we
assume that the equation 
\begin{equation}  \label{eq:lamisma}
X^{\xi}_t=\xi+\int_0^t{\ b}(s,X^{\xi}_s)ds+I_t,\quad t\ge 0,
\end{equation}
has a unique solution that may blow-up in finite time, where $b$ takes
values in ${\mathbb{R}}_+$ and $I_t=\int_0^tf(s)dW_s$. Note that if $%
\omega\in\Omega$ is such that $\tau_{\xi}(\omega)<\infty$, then $%
X^{\xi}_t>\xi+\inf_{0\le s\le \tau_{\xi}(\omega)}I_t(\omega),\ t\le
\tau_{\xi}(\omega)$, and consequently $\int_0^{\tau_{\xi}(\omega)}{\ b}%
(s,X^{\xi}_s)ds=\infty$. Thus, $X^{\xi}_{\tau_{\xi}-}=\infty$, on $%
[\tau_{\xi}<\infty]$, and $\tau_{\xi}=\tau_{\xi}^+$.

We begin with an auxiliary result.

\begin{lemma}
\label{lem:tl} let $\lambda >0$ and $\tau _{\xi }$ the explosion time of the
solution of equation (\ref{eq:lamisma}). Then 
\begin{equation*}
E\left( e^{-\lambda \tau _{\xi }}\right) =\lambda \int_{0}^{\infty }P(\tau
_{\xi }\leq u)e^{-\lambda u}du.
\end{equation*}
\end{lemma}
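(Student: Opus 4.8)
The plan is to reduce the statement to a pointwise (in $\omega$) identity for the random variable $\tau_\xi$ and then to interchange expectation and integration via Tonelli's theorem. First I would record the elementary identity
\begin{equation*}
e^{-\lambda \tau _{\xi }}=\lambda \int_{0}^{\infty }e^{-\lambda u}\,\mathbf{1}_{\{\tau _{\xi }<u\}}\,du,
\end{equation*}
which I claim holds for every $\omega \in \Omega $. When $\tau _{\xi }(\omega )<\infty $ the right-hand side is $\lambda \int_{\tau _{\xi }(\omega )}^{\infty }e^{-\lambda u}\,du=e^{-\lambda \tau _{\xi }(\omega )}$; when $\tau _{\xi }(\omega )=\infty $ the indicator is identically zero in $u$ and, with the convention $e^{-\lambda \infty }=0$, both sides vanish. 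This is exactly the step that deserves care, because $\tau _{\xi }$ takes values in $[0,\infty ]$ and may equal $+\infty $ with positive probability, so the mass $P(\tau _{\xi }=\infty )$ must be checked to contribute nothing.

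Next I would take expectations on both sides. The integrand $\lambda e^{-\lambda u}\mathbf{1}_{\{\tau _{\xi }<u\}}$ is nonnegative and jointly measurable in $(\omega ,u)$, so Tonelli's theorem applies and allows the interchange of $E$ with the $du$-integral, yielding
\begin{equation*}
E\left( e^{-\lambda \tau _{\xi }}\right) =\lambda \int_{0}^{\infty }e^{-\lambda u}\,P(\tau _{\xi }<u)\,du.
\end{equation*}
Finally I would replace $P(\tau _{\xi }<u)$ by $P(\tau _{\xi }\leq u)$: as functions of $u$ these differ only on the at most countable set of atoms of the law of $\tau _{\xi }$, a set of Lebesgue measure zero, so the value of the integral against $e^{-\lambda u}\,du$ is unchanged. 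This produces the asserted formula.

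The only genuine obstacle is the bookkeeping on the event $\{\tau _{\xi }=\infty \}$, i.e.\ confirming that the pointwise identity degenerates in the right way there so that $P(\tau _{\xi }=\infty )$ drops out, consistent with interpreting $E(e^{-\lambda \tau _{\xi }})$ under $e^{-\lambda \infty }=0$. Both remaining ingredients—the Tonelli interchange and the passage from strict to non-strict inequality—are routine.
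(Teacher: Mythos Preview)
Your proof is correct and follows essentially the same route as the paper's: both write $e^{-\lambda s}=\lambda\int_s^\infty e^{-\lambda u}\,du$ and then interchange the order of integration via Fubini/Tonelli. The only cosmetic difference is that the paper works directly with the distribution $F_{\tau_\xi}$ on $(0,\infty)$ (so the inner integral $\int_{(0,u]}F_{\tau_\xi}(ds)$ immediately gives $P(\tau_\xi\le u)$), whereas you work pointwise on $\Omega$ and then need the harmless extra step of passing from $P(\tau_\xi<u)$ to $P(\tau_\xi\le u)$.
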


\begin{proof}
Let us denote the distribution of $\tau _{\xi }$ by $F_{\tau _{\xi }}$.
Fubini theorem leads to justify 
\begin{eqnarray*}
E\left( e^{-\lambda \tau _{\xi }}\right) &=&\int_{(0,\infty )}e^{-\lambda
s}F_{\tau _{\xi }}(ds)=\lambda \int_{(0,\infty )}\left( \int_{s}^{\infty
}e^{-\lambda u}du\right) F_{\tau _{\xi }}(ds) \\
&=&\lambda \int_{0}^{\infty }\left( \int_{(0,u]}F_{\tau _{\xi }}(ds)\right)
e^{-\lambda u}du=\lambda \int_{0}^{\infty }F_{\tau _{\xi }}(u)e^{-\lambda
u}du.
\end{eqnarray*}%
Consequently, the proof is complete.$\hfill $
\end{proof}

Now we can state the main result of this subsection.

\begin{theorem}
\label{thm:22} Consider $\lambda>0$, the explosion time $\tau_{\xi}$ of the
solution of (\ref{eq:lamisma}) and a bounded function $u:[0,\infty )\times 
\mathbb{R}\rightarrow \mathbb{R}$ that is a solution of the partial
differential equation 
\begin{eqnarray*}  \label{eq:tlas}
-\frac{\partial u}{\partial t}(t,x) &=&\frac{1}{2}f^{2}(t)\frac{\partial
^{2}u}{\partial x^{2}}(t,x)+b(t,x)\frac{\partial u}{\partial x}(t,x)-\lambda
u(t,x), \ t>0\ \hbox{\rm and}\ x\in{\mathbb{R}}, \\
u(t,\infty)&=&1.
\end{eqnarray*}%
Then, $\lambda\int_0^{\infty} P(\tau_{\xi}\le u)e^{-\lambda u}du =u(0,\xi).$
\end{theorem}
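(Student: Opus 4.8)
The plan is to combine Lemma \ref{lem:tl} with a Feynman--Kac type argument. By Lemma \ref{lem:tl} the quantity $\lambda\int_0^\infty P(\tau_\xi\le u)e^{-\lambda u}\,du$ equals $E(e^{-\lambda\tau_\xi})$, so it suffices to prove that $E(e^{-\lambda\tau_\xi})=u(0,\xi)$. The natural candidate to track is the discounted process $M_t=e^{-\lambda t}u(t\wedge\tau_\xi,X^\xi_{t\wedge\tau_\xi})$, or more precisely a localized version using the stopping times $\tau^m_\xi=\inf\{t>0:|X^\xi_t|>m\}$. First I would apply It\^o's formula to $e^{-\lambda s}u(t-s,X^\xi_s)$ (note the parabolic PDE here is written with $-\partial u/\partial t$, so the relevant substitution is a backward time argument as in the proof of Theorem \ref{thm:distet}); using that $u$ solves the PDE, the finite-variation part collapses, the $-\lambda u$ term produces exactly the discount factor, and one is left with a stochastic integral driven by $dW$ whose integrand involves $\partial u/\partial x\cdot f(s)$.

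The key steps, in order, are the following. First, choose the correct space-time substitution so that the drift and diffusion terms combine into the PDE operator; because the equation carries a $-\partial_t u$ on the left and a $-\lambda u$ on the right, the right object is $s\mapsto e^{-\lambda s}u(t-s,X^\xi_s)$ evaluated up to $s\wedge\tau^m_\xi$, mirroring the backward-time device already used in Theorem \ref{thm:distet}. Second, invoke boundedness of $u$ to conclude the stochastic integral is a genuine martingale on the localized interval, so taking expectations gives $u(t,\xi)=E\big[e^{-\lambda(s\wedge\tau^m_\xi)}u(t-(s\wedge\tau^m_\xi),X^\xi_{s\wedge\tau^m_\xi})\big]$. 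Third, let $s\uparrow t$ and then $m\to\infty$, using the dominated convergence theorem (legitimate since $u$ is bounded), to split the expectation over the events $\{\tau_\xi\le t\}$ and $\{\tau_\xi>t\}$. On $\{\tau_\xi\le t\}$ we have $\tau_\xi=\tau_\xi^+$ and $X^\xi_{\tau_\xi-}=\infty$, so the boundary condition $u(t,\infty)=1$ forces that contribution to be $e^{-\lambda\tau_\xi}$; on $\{\tau_\xi>t\}$ the initial condition $u(0,\cdot)=0$ (here one needs $u(0,x)=0$ to hold, presumably inherited from the setup of equation \eqref{eq:nagre}) kills the remaining term.

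The final step is to pass $t\to\infty$. After steps two and three one arrives at $u(t,\xi)=E\big[e^{-\lambda\tau_\xi}\mathbf 1_{\{\tau_\xi\le t\}}\big]$, and letting $t\to\infty$ via monotone (or dominated) convergence yields $u(0,\xi)=E\big[e^{-\lambda\tau_\xi}\big]$ — but I must be careful about which time slice of $u$ appears, since the conclusion is stated at $(0,\xi)$ rather than at a generic $(t,\xi)$. The cleanest route is to track $e^{-\lambda s}u(t-s,X^\xi_s)$ so that the identity reads $u(t,\xi)=E\big[e^{-\lambda\tau_\xi}u(t-\tau_\xi,\infty)\mathbf 1_{\{\tau_\xi\le t\}}\big]=E\big[e^{-\lambda\tau_\xi}\mathbf 1_{\{\tau_\xi\le t\}}\big]$, whence the claim is really that $u(0,\xi)=\lim_{t\to\infty}u(t,\xi)=E(e^{-\lambda\tau_\xi})$; this is consistent only if one reads the Laplace-transform identity through the time-reversed variable, so reconciling the appearance of $u(0,\xi)$ rather than $u(\infty,\xi)$ is the point requiring the most care.

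The main obstacle I expect is precisely this time-direction bookkeeping: the PDE is posed forward in $x$ but backward in the parabolic sense, and one must be scrupulous that the It\^o expansion of $e^{-\lambda s}u(t-s,X^\xi_s)$ produces the operator with the correct signs so that the drift and second-order terms cancel against $\partial_t u$ and the $-\lambda u$ term supplies the discount. A secondary technical point is justifying the limit as $X^\xi_{\tau_\xi-}\to\infty$ inside the expectation, which relies on $u(t,\infty)=1$ holding uniformly enough in $t$ to interchange the limit with the expectation; boundedness of $u$ together with dominated convergence should suffice, exactly as in the proof of Theorem \ref{thm:distet}.
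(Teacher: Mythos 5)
There is a genuine gap, and it is exactly at the point you flagged as requiring the most care: the time-direction of the It\^o substitution. The PDE in Theorem \ref{thm:22} is already written in backward (Feynman--Kac) form, with $-\partial u/\partial t$ on the left, so the object that is a local martingale is the \emph{forward}-time discounted process $s\mapsto e^{-\lambda s}u(s,X^{\xi}_{s})$, not $s\mapsto e^{-\lambda s}u(t-s,X^{\xi}_{s})$. If you expand your candidate by It\^o's formula, the drift at time $s$ is
\begin{equation*}
e^{-\lambda s}\Bigl[-\lambda u-(\partial_{t}u)(t-s,X^{\xi}_{s})+b(s,X^{\xi}_{s})(\partial_{x}u)(t-s,X^{\xi}_{s})+\tfrac{1}{2}f^{2}(s)(\partial_{xx}u)(t-s,X^{\xi}_{s})\Bigr],
\end{equation*}
and substituting the PDE (evaluated at $(t-s,x)$) leaves $e^{-\lambda s}\bigl[\tfrac{1}{2}(f^{2}(s)+f^{2}(t-s))\partial_{xx}u+(b(s,\cdot)+b(t-s,\cdot))\partial_{x}u-2\lambda u\bigr]$, which does not vanish; note in particular that the time-dependent coefficients $f$ and $b$ get evaluated at mismatched times, so no cancellation is possible. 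The backward device of Theorem \ref{thm:distet} works there precisely because that PDE has $+\partial_{t}u$ on the left and autonomous coefficients; here the sign is flipped and the coefficients are nonautonomous.

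With the correct forward substitution, $u(0,\xi)$ appears for free as the initial value of the (bounded, hence true) martingale, which is what the paper does: $u(0,\xi)=E\bigl[e^{-\lambda(t\wedge\tau^{m}_{\xi})}u(t\wedge\tau^{m}_{\xi},X^{\xi}_{t\wedge\tau^{m}_{\xi}})\bigr]$. Letting $m\to\infty$, the contribution on $\{\tau_{\xi}\le t\}$ becomes $E[e^{-\lambda\tau_{\xi}}\mathbf{1}_{\{\tau_{\xi}\le t\}}]$ via $X^{\xi}_{\tau_{\xi}-}=\infty$ and $u(\cdot,\infty)=1$, while the contribution on $\{\tau_{\xi}>t\}$ is bounded by $e^{-\lambda t}\sup|u|$ and is killed by letting $t\to\infty$ --- no initial condition $u(0,x)=0$ is assumed in Theorem \ref{thm:22}, nor is one needed, so your appeal to it is both unjustified and superfluous. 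The downstream symptom of the wrong substitution is your final step, where you are forced to claim $u(0,\xi)=\lim_{t\to\infty}u(t,\xi)$, an identity with no justification. Your use of Lemma \ref{lem:tl}, the localization $\tau^{m}_{\xi}$, boundedness of $u$ for the martingale property, and the fact $\tau_{\xi}=\tau_{\xi}^{+}$ are all correct and match the paper; only the substitution must be repaired.
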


\begin{proof}
As in the proof of Theorem \ref{thm:distet} It\^{o}'s formula gives 
\begin{equation*}
u(0,\xi )=E\left( u(t\wedge \tau _{\xi }^{m},X_{t\wedge \tau _{\xi
}^{m}}^{\xi })\exp (-\lambda (t\wedge \tau _{\xi }^{m}))\right) .
\end{equation*}%
We can now easily complete the proof of this result by combining Lemma \ref%
{lem:tl}, the arguments used in the last part of the proof of Theorem \ref%
{thm:distet} and the fact that $X_{\tau _{\xi }^{m}}^{\xi }\rightarrow
\infty $ as $m\rightarrow \infty $ on $[\tau _{\xi }<\infty ]$. Indeed we
first take $m\uparrow \infty $, and then $t\uparrow \infty $. $\hfill $
\end{proof}

\bigskip 

\noindent \textbf{Remark}\textit{\ In some cases we have the converse of
Theorem \ref{thm:22}. For example, consider the stochastic differential
equation
\begin{equation*}
X_{t}^{\xi }=\xi +\int_{0}^{t}(g(X_{s}^{\xi })+a-b)ds+cW_{t},\ \ t\geq 0,
\end{equation*}%
where $a,b,c\in \mathbb{R}$, $a>b$ and $g:{\mathbb{R}} \rightarrow \lbrack 0,\infty )$. Then the associated ordinary
differential equation is
\begin{eqnarray}
\frac{c^{2}}{2}w^{\prime \prime }(x)+(g(x)+a-b)w^{\prime }(x)-\lambda w(x)
&=&0,\ \ t>0, \label{exexpfeller} \\
w(\infty ) &=&1.  \notag
\end{eqnarray}
Therefore, if $X^{\xi }$ explodes in finite then (\ref{exexpfeller}) has a bounded solution, in fact it is the Laplace transform
of the explosion time (see \cite{Wfe}). Then the solution of 
\begin{eqnarray*}
-\frac{\partial u}{\partial t}(t,x) &=&\frac{c^{2}}{2}\frac{\partial ^{2}u}{%
\partial x^{2}}(t,x)+(g(x-bt)+a)\frac{\partial u}{\partial x}(t,x)-\lambda
u(t,x),\ \ t>0, \ x\in {\mathbb{R}}, \\
u(t,\infty ) &=&1.
\end{eqnarray*}%
is given by $u(t,x)=w(x-bt)$.}

\bigskip

\noindent \textbf{Acknowledgements:} Professor Villa-Morales was partially
supported by grant 118294 of CONACyT and grant PIM13-3N of UAA. The authors
Thanks Universidad Aut\'{o}noma de Aguascalientes y Cinvestav-IPN for their
hospitality and economical support. Other authors were also partially
supported by a CONACyT grant.

\end{document}